\def\Vol{\mathop{\rm Area}}
\DeclareMathOperator{\Spec}{Spec}
\DeclareMathOperator{\RayQ}{RayQ}
\DeclareMathOperator{\inj}{inj}
\theoremstyle{plain}
\newtheorem{theorem}{Theorem}[section]
\newtheorem{corollary}[theorem]{Corollary}
\newtheorem{proposition}[theorem]{Proposition}
\newtheorem{lemma}[theorem]{Lemma}
\newtheorem{remark}[theorem]{Remark}
\newcommand{\be}{\begin{equation}}
\newcommand{\ene}{\end{equation}}
\newcommand{\br}{\begin{remark}}
\newcommand{\er}{\end{remark}}
\newcommand{\ben}{\begin{enumerate}}
\newcommand{\een}{\end{enumerate}}
\newcommand{\bp}{\begin{proof}}
\newcommand{\ep}{\end{proof}}
\newcommand{\beq}{\begin{equation*}}
\newcommand{\eeq}{\end{equation*}}
\newcommand{\bear}{\begin{eqnarray}}
\newcommand{\eear}{\end{eqnarray}}
\newcommand{\beqar}{\begin{eqnarray*}}
\newcommand{\eeqar}{\end{eqnarray*}}
\newcommand{\bt}{\begin{theorem}}
\newcommand{\et}{\end{theorem}}
\theoremstyle{definition}
\newtheorem{definition}[theorem]{Definition}
\theoremstyle{remark}
\newtheorem*{con*}{Construction}
\newtheorem*{rem*}{Remark}
\newtheorem*{exam*}{Example}
\newtheorem*{exams*}{Examples}
\newtheorem*{thm*}{\bf Theorem}
\newtheorem*{que*}{Question}
\newtheorem*{Def*}{Definition}
\newtheorem*{Cons*}{Construction}
\newtheorem*{Lem*}{Lemma}
\newtheorem*{Conj*}{\bf Conjecture}
\theoremstyle{plain}
\newtheorem*{def*}{Definition}
\begin{document}
\setlength{\emergencystretch}{3em}
\title[Linearly many cusps and spectral gaps]{Spectral gaps for noncompact hyperbolic surfaces with linearly many cusps}
\author{Qi Guo, Bobo Hua and Yang Shen}

\begin{abstract}
We construct complete finite-area noncompact hyperbolic surfaces with linearly many cusps and a uniform spectral gap. More precisely, for every \(\theta>0\), we construct a sequence \(S_{g,n(g)}\in\mathcal{M}_{g,n(g)}\) such that \(\lim\limits_{g\to\infty}\frac{n(g)}{g}=\theta\) and the spectrum of the Laplacian has a uniform gap above zero. The construction is based on explicit expanding \((1,3)\)-graphs, viewed as combinatorial skeletons for pants decompositions. We also establish a Steklov-type upper bound showing that expansion cannot persist when the number of boundary vertices is much larger than the genus.
\end{abstract}

\maketitle
\footnotetext[1]{{\bf 2020 MSC}\quad Primary 53A70; Secondary 05C50, 15A42, 39A12.}
\numberwithin{equation}{section}
\tableofcontents

\section{Introduction}

Let \(\mathcal{M}_{g,n}\) denote the moduli space of complete finite-area hyperbolic surfaces of genus \(g\) with \(n\) cusps. If \(X\in\mathcal{M}_{g,n}\) is connected, then the eigenvalue \(0\) of \(\Delta_X\) is simple, the absolutely continuous spectrum is \([1/4,\infty)\), and only finitely many discrete eigenvalues can lie in \((0,1/4)\); embedded eigenvalues may also occur in \([1/4,\infty)\), see \cite{Iwaniec02}. For \(0<c\leq 1/4\), we call \(X\) a \(c\)-\emph{expander surface} if
\[
    \Spec(\Delta_X)\cap(0,c)=\emptyset.
\]
In this paper, we study whether such a spectrum-free interval can be uniform along sequences for which the number of cusps is proportional to the genus.

The value \(1/4\) also occurs in Selberg's eigenvalue conjecture, which predicts that a congruence surface has no nonzero \(L^2\)-eigenvalue below \(1/4\); Selberg proved the lower bound \(3/16\) \cite{Sel65,Iwaniec02}. Before turning to cusps, we briefly recall that spectral gap estimates for closed random hyperbolic surfaces have recently seen substantial progress toward the \(1/4\) threshold in three standard models: the Weil--Petersson model \cite{Mir13,WX22,LW24,AM25,HMT25}, the random-covering model \cite{MNP22,MPH25,HMTRandomCovers25}, and the Brooks--Makover model \cite{BM04,SW25}. For surfaces with cusps, the number \(n\) is a second asymptotic parameter. When \(n(g)=O(g^\alpha)\) with \(\alpha<1/2\), random surfaces in the moduli space $\mathcal{M}_{g,n(g)}$
  admit a uniform spectral gap with high probability, and explicit lower bounds depending on $\alpha$ are known in this range \cite{Hide23,HWX26}. In contrast, when $n(g)$ grows significantly faster than $\sqrt{g}$
 , the spectral behavior becomes qualitatively different: arbitrarily small spectral gaps can arise in certain sublinear many‑cusp regimes \cite{SW23}, and random punctured spheres have been shown to have many small eigenvalues \cite{HT24a}. Zograf's upper bound implies that a uniform positive spectral gap is impossible when \(\lim\limits_{g\to\infty}n(g)/g=\infty\) \cite{Zo84}. In this article, we study the case that
\[
    \lim\limits_{g\to\infty}\frac{n(g)}{g}=\theta\in(0,\infty)
\]
and give a construction of hyperbolic surfaces with a uniform spectral gap.

\begin{theorem}\label{t-main-4}
For every \(\theta>0\), there exist a constant \(c_\theta>0\), a function \(n:\mathbb N\to\mathbb N\), and a sequence of complete finite-area hyperbolic surfaces
\[
    S_{g,n(g)}\in\mathcal{M}_{g,n(g)},\qquad g\geq2,
\]
such that
$$\lim\limits_{g\to\infty}\frac{n(g)}{g}=\theta$$
and, for all sufficiently large $g$,
$$\Spec(\Delta_{S_{g,n(g)}})\cap(0,c_\theta)=\emptyset.$$
Furthermore, the constant can be chosen to satisfy
\[
    c_\theta\geq c_0(1+\theta)^{-2}
\]
for a universal constant \(c_0>0\).
\end{theorem}

\begin{rem*}
   It follows from Zograf’s estimate that if $n(g)/g\to\infty$, the spectral gap of hyperbolic surface $S_{g,n(g)}\in\mathcal{M}_{g,n(g)}$
  becomes arbitrarily small. Theorem \ref{t-main-4} shows that once this condition is relaxed to $n(g)/g$ converging to a finite positive constant, a uniform positive spectral gap can in fact be obtained.
\end{rem*}

When \(\theta\) is sufficiently small, the spectrum-free interval can be chosen arbitrarily close to \((0,1/4)\). More precisely, Proposition \ref{p-theat-small} states that for every \(\varepsilon>0\), there exists \(\theta_0>0\) such that, for every \(0<\theta<\theta_0\), one can choose \(S_{g,n(g)}\in\mathcal{M}_{g,n(g)}\) with \(n(g)/g\to\theta\) and
\[
    \Spec(\Delta_{S_{g,n(g)}})\cap
    \left(0,\frac14-\varepsilon\right)=\emptyset.
\]
This argument depends on the developments in random surface theory \cite{Mir13,AM25,HMT25} and will be shown in Section \ref{sec:construction}.

\noindent\textbf{Strategy of the proof of Theorem \ref{t-main-4}.}
The proof has a discrete step and a geometric step. The main new ingredient is not the use of an expander graph as a combinatorial skeleton, which is classical, but a quantitative procedure that modifies both the boundary size and the topology while retaining expansion. Starting from explicit cubic expanders \cite{Chiu92,MSS15,Kowalski19}, we replace each edge by a fixed tree \(T_k\). This creates degree-\(1\) vertices, hence cusps in the associated surface, at a quantitatively controlled loss of the Cheeger constant. We then add loops at selected degree-\(1\) vertices. This second operation changes the genus and the number of boundary vertices but leaves every edge cut unchanged. The two operations therefore allow us to change the ratio \(n(g)/g\) to an arbitrary prescribed value \(\theta>0\), rather than only to the discrete ratios arising from a fixed replacement, while preserving a uniform expansion estimate. The relevant Cheeger estimate is proved in Lemma \ref{l-5.1}, and the resulting family \(H(g)\), satisfying
\[
    \frac{n(g)}{g}\longrightarrow\theta,
    \qquad
    \lambda_1(H(g))\gtrsim (1+\theta)^{-2},
\]
is constructed in Proposition \ref{t-main-3} of Section \ref{sec:explicit-graphs}. For the geometric step, we use the pants-skeleton construction originating in the work of Buser \cite{MR0505920} and Brooks--Makover \cite{BM04}; related discrete models in large-genus hyperbolic geometry appear in \cite{BCP21,BL21,bcl23}. The point that must be established in the present noncompact setting is that graph expansion survives the passage to a surface with cusps. This is provided by the uniform comparison
\[
    h(X_a(G))\geq C_a\min\{h(G),1\},
\]
proved in Lemma \ref{l-com-h} of Section \ref{sec:graph-to-surface}. Applying this comparison to \(H(g)\), and then using Cheeger's inequality \cite{Cheeger70,Bu82} together with Theorem \ref{l-spec}, produces the required spectral gap. Keeping track of the constants throughout the two steps yields the quantitative bound
\[
    c_\theta\geq c_0(1+\theta)^{-2};
\]
the argument is completed in Section \ref{sec:surfaces}.

In this paper, we denote by \(\mathcal{F}_{g,n}\) the class of connected \((1,3)\)-graphs with \(2g-2+n\) interior vertices of degree-$3$ and \(n\) distinguished boundary vertices of degree-$1$, see Definition~\ref{def:Fgn}. 
The preceding construction produces expander families throughout the linear regime \(n(g)\asymp g\). This regime is optimal at the level of growth rates. Indeed, if the number of boundary vertices grows faster than the genus, so that \(n(g)/g\to\infty\), then the spectral gap necessarily tends to zero; consequently, no expander family can exist in this graph class. The following estimate makes this obstruction quantitative through the discrete Steklov spectrum. Let \(\sigma_1(G)\) denote the first nonzero Steklov eigenvalue of \(G\) with boundary \(\delta G\).

\begin{theorem}\label{t-main-1}
For integers $n\geq 2,\ g\geq 0$, and connected graph \(G\in\mathcal{F}_{g,n}\), we have
\[
    \lambda_1(G)\leq\sigma_1(G)\leq\frac{16(g+1)}{3n}.
\]
\end{theorem}

Consequently, if \(n(g)/g\to\infty\), then \(\lambda_1(G_g)\to0\) for every sequence of connected graphs $\{G_g\}$ with $G_g\in\mathcal{F}_{g,n(g)}$ as $g\to\infty$. The graph construction for fixed \(n(g)/g\) and this upper bound for \(n(g)/g\to\infty\) give the discrete counterparts of the two geometric statements above.

The paper is organized as follows. Section \ref{sec2} defines the classes \(\mathcal{F}_{g,n}\), the graph Laplacian and Steklov spectrum, and the associated surfaces \(X(G)\); it also proves the comparison between \(h(G)\) and \(h(X(G))\) and its spectral consequence. Section \ref{sec:construction} constructs the graphs \(H(g)\), proves Theorem \ref{t-main-4}, and then proves Proposition \ref{p-theat-small} by puncturing suitable closed Weil--Petersson surfaces. Section \ref{sec:steklov} proves the combinatorial separation result used in the Steklov estimate, establishes Theorem \ref{t-main-1}, and derives the large-boundary corollary.

\section{\texorpdfstring{$(1,3)$}{(1,3)}-graphs and associated hyperbolic surfaces}\label{sec2}
 
We first describe the deterministic graph class used throughout the paper. We allow multiple edges, and we allow loops at degree $3$ vertices; as usual, a loop contributes $2$ to the degree of its incident vertex.

\begin{definition}\label{def:Fgn}
For non-negative integers $g$ and $n$ such that $2g-2+n>0$, $\mathcal{F}_{g,n}$ is the set consisting  of all connected graphs $G$ with a distinguished decomposition
\[
    V(G)=V^\prime(G)\sqcup \delta G
\]
satisfying the following conditions:
\begin{enumerate}[label=\textup{(\roman*)}]
    \item $|V^\prime(G)|=2g-2+n$ and $|\delta G|=n$;
    \item every vertex in $V^\prime(G)$ has degree $3$;
    \item every vertex in $\delta G$ has degree $1$.
\end{enumerate}
The vertices in $\delta G$ are called the boundary vertices of $G$.
\end{definition}

Given $G\in\mathcal{F}_{g,n}$, for any vertex $v\in V^\prime(G)$, replace it by a pair of pants. Gluing these pairs of pants according to the edges between degree $3$ vertices, and keeping the edges ending at $\delta G$ as boundary components or cusps, one obtains a surface of genus $g$ with $n$ boundary components or cusps; see Figure \ref{glue} for the case that $g=0$ and $n=4$.
    
    \begin{figure}[ht]
\centering
\includegraphics[width=0.8\textwidth]{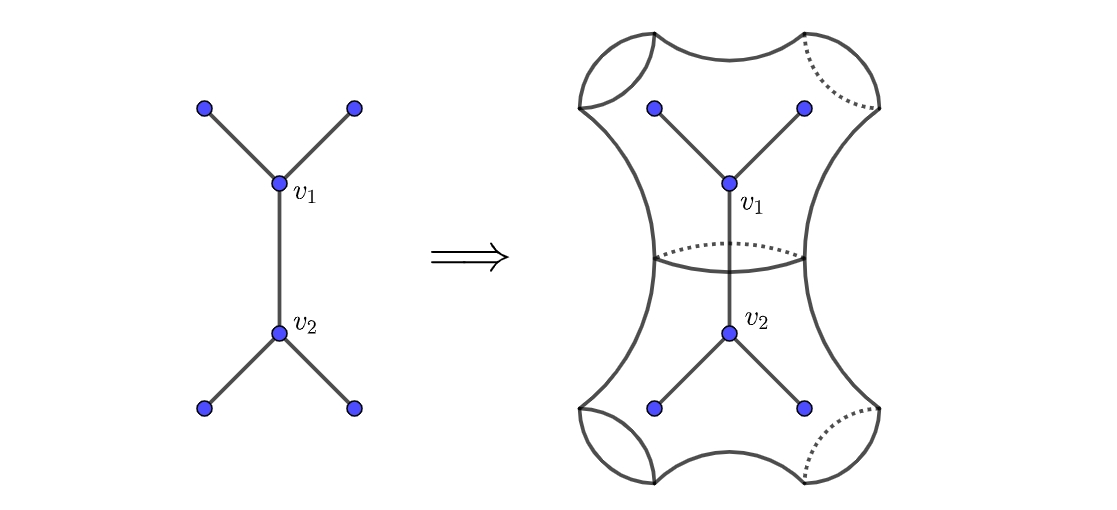}
\caption{Recovering a surface from its graphs skeleton }
\label{glue}
\end{figure}

\subsection{Cheeger constant}\label{sec-ch}
Another key geometric quantity to study expander graphs is the Cheeger constant, motivated by Cheeger \cite{Cheeger70}. The Cheeger estimate has been particularly influential, as it reveals a close relationship between the first eigenvalue of the graph Laplacian and the Cheeger constant. We recall below the graph Cheeger inequality and the Steklov comparison estimate needed in this paper.
Assume $G$ is a connected graph, and $\Omega_1,\Omega_2\subseteq V(G)$ are two subsets. Define
    $$E(\Omega_1,\Omega_2)=\left\{e\in E(G);\ \begin{matrix}\text{$e$ joins a pair of vertices}\\ \text{ from $\Omega_1$ and $\Omega_2$ respectively}\end{matrix}\right\}.$$ 
    For any subset $\Omega\subseteq V(G)$, set $$\partial\Omega=E(\Omega,\Omega^c).$$
    The Cheeger constant $h(G)$ of $G$ is defined as 
$$h(G)=\inf\limits_{\substack{\emptyset\neq \Omega\subseteq V(G),\\ |\Omega|\leq\frac{|V(G)|}{2}}}\frac{|\partial \Omega|}{|\Omega|}.$$
According to the observation of Yau, see   \cite[Theorem 8.3.6]{Buser-book}, for any hyperbolic surface $X$, $h(X)$ is realized by some curve which divides $X$ into two connected components. Similar result  holds for the case of connected graphs. We restate the result as follows. 
\begin{lemma}\cite[Theorem 2.11]{CSZ15}\label{l-con}
    For any connected graph $G$, $h(G)$ is realized by some subset $\Omega$ such that $\Omega$ and $G\setminus\Omega$ are both connected. 
\end{lemma}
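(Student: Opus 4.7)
The plan is to start with any subset $\Omega \subset V(G)$ attaining $h(G) = |\partial\Omega|/|\Omega|$ with $|\Omega| \leq |V(G)|/2$, and then perform two successive reductions—first on $\Omega$, then on its complement—to produce a minimizer both of whose sides are connected. Both reductions are of the Yau-type weighted-average flavor and use only the combinatorial identity $|\partial\Omega| = \sum_j |\partial \Omega_j|$ whenever $\Omega$ is a disjoint union of connected components $\Omega_j$ of $G$ restricted to $\Omega$, together with the defining infimum property of $h(G)$.

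For the first reduction, decompose $\Omega$ into its connected components $\Omega_1,\ldots,\Omega_r$ in $G$. No edge of $G$ runs between distinct $\Omega_j$ (else they would lie in the same component), so $|\partial\Omega| = \sum_j |\partial \Omega_j|$, and
\[
h(G) = \frac{\sum_j |\partial \Omega_j|}{\sum_j |\Omega_j|}
\]
is a weighted average. Hence some $\Omega_j$ has $|\partial \Omega_j|/|\Omega_j| \leq h(G)$, and since $|\Omega_j| \leq |V(G)|/2$, equality holds. Replacing $\Omega$ with this $\Omega_j$, we may assume $\Omega$ is connected. The second reduction handles $\Omega^c = V(G)\setminus\Omega$. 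If it decomposes as $C_1 \sqcup \cdots \sqcup C_k$ with $k \geq 2$, I split into two cases. When some $|C_i| > |V(G)|/2$—say $C_1$—take $A := V(G)\setminus C_1 = \Omega \cup C_2 \cup \cdots \cup C_k$. By connectivity of $G$, each $C_j$ is attached to $\Omega$ by at least one edge, so $A$ is connected, and $V(G)\setminus A = C_1$ is connected by construction. One checks $|A| < |V(G)|/2$, $|\partial A| = |\partial C_1| \leq |\partial \Omega|$, and $|A| = |V(G)| - |C_1| \geq |\Omega|$, which together force $|\partial A|/|A| = h(G)$. When instead every $|C_i| \leq |V(G)|/2$, the identity $\sum_i |\partial C_i| = |\partial\Omega| = h(G)|\Omega|$, combined with $|\partial C_i| \geq h(G)|C_i|$ and $\sum_i |C_i| = |V(G)| - |\Omega| \geq |\Omega|$, forces $|\partial C_i| = h(G)|C_i|$ for every $i$. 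Then any single $C_i$ is a connected minimizer, and its complement $\Omega \cup \bigcup_{j\neq i} C_j$ is connected through $\Omega$.

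The step that will require genuine care is the second reduction in the case of a large component $C_1$: the correct direction of the inequality relies on the identity $|V(G)| - |C_1| = |\Omega| + \sum_{j\geq 2}|C_j| \geq |\Omega|$, which is exactly what lets $|\partial A|/|A|$ dominate in the right way. Once this bookkeeping is in place, the lemma follows without any spectral input, and the role of the hypothesis $|\Omega| \leq |V(G)|/2$ is only to keep every intermediate set inside the admissible class for the Cheeger infimum.
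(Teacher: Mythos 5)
Your proof is correct. The paper itself does not prove this lemma; it simply cites Theorem~2.11 of [CSZ15], so there is no in-paper argument to compare against. Your two-stage Yau-type reduction is the standard route and it is complete: the first reduction correctly exploits $|\partial\Omega|=\sum_j|\partial\Omega_j|$ (since distinct components of $G[\Omega]$ have no edges between them) together with the mediant/weighted-average inequality and admissibility of each $\Omega_j$; the second reduction correctly uses that every component $C_j$ of $G[\Omega^c]$ is joined to $V(G)\setminus C_j$ only through $\Omega$, so both $A=V(G)\setminus C_1$ in Case~I and $\Omega\cup\bigcup_{j\neq i}C_j$ in Case~II are connected. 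The bookkeeping in Case~I ($|\partial A|=|\partial C_1|\le|\partial\Omega|$, $|A|\ge|\Omega|$, $|A|<|V(G)|/2$) and the equality-forcing chain in Case~II are both sound; as a byproduct of Case~II one even gets $|\Omega|=|V(G)|/2$, which is harmless. The only implicit hypothesis worth stating is $h(G)>0$, which holds for any connected graph on at least two vertices and is needed to divide by $h(G)$ in the Case~II equality argument.
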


\subsection{Eigenvalues on graphs}\label{subsec-eigen}
In this subsection, we introduce some definitions and properties of eigenvalues on graphs. For more details, one may refer to \cite{Gri-book,HH22}. We always assume $G$ is a connected finite graph in $\mathcal{F}_{g,n}$ for some $g\geq 0,\ n\geq 2$ such that $2g-2+n>0$. For any subset $\Omega$ of vertices, denote  
$$\mathbb{R}^{\Omega}=\{f;\ f:\Omega\to\mathbb{R}\}.$$
The Laplacian operator $\Delta_G:\mathbb{R}^{V(G)}\to\mathbb{R}^{V(G)}$ is defined as, for any $f\in\mathbb{R}^{V(G)}$ and vertex $v\in V(G)$,
$$\Delta_G f(v)=\frac{1}{\text{deg}(v)}\sum\limits_{w\sim v}\left(f(w)-f(v)\right),$$
where $w\sim v$ means that there is an edge joining $w$ and $v$. 
The eigenvalues of $-\Delta_G$ can be enumerated  as follows
$$0=\lambda_0(G)<\lambda_1(G)\leq\lambda_2(G)\leq...\leq\lambda_{\#V(G)-1}\leq 2.$$
Here $\lambda_1(G)$ is called the first Laplacian eigenvalue of the graph $G$. In this paper, a graph sequence $\{G_k = (V_k, E_k)\}$ is called an expander family if the number of vertices $|V_k|$ tends to infinity as $k \to \infty$, while the spectral gap measured by $\lambda_1(G_k)$ is uniformly bounded below; see, for instance, \cite{Kowalski19}.  Similar to the case of manifolds, the first eigenvalue is related to the Cheeger constant. In our setting, the following Cheeger's inequality holds, see e.g., \cite[Theorem 3.3]{Gri-book}. 
\begin{proposition}\label{t-ch}
    Assume $G$ is a connected graph in $\mathcal{F}_{g,n}$ for some $g\geq 0,\ n\geq 2$ with $2g-2+n>0$. Then we have 
    $$\lambda_1(G)\geq \frac{1}{18}h(G)^2.$$
\end{proposition}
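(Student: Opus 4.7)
The plan is to derive this from the standard Cheeger argument for the normalized Laplacian, keeping careful track of the normalization because the Cheeger constant in Section \ref{sec-ch} is defined combinatorially (with $|\Omega|$ rather than $\mathrm{vol}(\Omega) = \sum_{v\in\Omega}\deg(v)$). The degree bound $\deg(v)\le 3$ will furnish the factor $1/18 = 1/(2\cdot 3^2)$.

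First I would invoke the variational characterization of $\lambda_1(G)$. Because $\Delta_G$ is symmetric with respect to $\langle f,h\rangle_{\deg} := \sum_{v}\deg(v)f(v)h(v)$, one has
\[
\lambda_1(G) \;=\; \inf_{f\neq 0,\ \langle f,1\rangle_{\deg}=0} \frac{\sum_{uv\in E}(f(u)-f(v))^2}{\sum_{v}\deg(v)\,f(v)^2}.
\]
Let $f$ be a $\lambda_1$-eigenfunction. Pick a constant $c$ equal to a median of $f$ with respect to the counting measure on $V(G)$, so that both $|\{f>c\}|$ and $|\{f<c\}|$ are at most $|V(G)|/2$. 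Replace $f$ by $f-c$ and take the positive part $g := f_+$; then $\Omega := \{g>0\}$ satisfies $|\Omega|\le |V(G)|/2$.

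Next I would carry out the standard ``summation by parts on the positive part'' step: using $(-\Delta_G)f = \lambda_1 f$ and the fact that $g(v)f(v)=g(v)^2$,
\[
\sum_{uv\in E}(f(u)-f(v))(g(u)-g(v)) \;=\; \sum_{v}g(v)\!\sum_{w\sim v}(f(v)-f(w)) \;=\; \lambda_1\sum_{v}\deg(v)\,g(v)^2.
\]
A case check on the sign of $f$ at the endpoints of each edge gives $(f(u)-f(v))(g(u)-g(v))\ge (g(u)-g(v))^2$, so
\[
\lambda_1 \;\ge\; \frac{\sum_{uv\in E}(g(u)-g(v))^2}{\sum_{v}\deg(v)\,g(v)^2}.
\]
Now I would apply the discrete coarea formula to $g^2$: writing $\sum_{uv}|g(u)^2-g(v)^2|=\int_{0}^{\infty}|\partial\{g^2>t\}|\,dt$ and noting that $\{g^2>t\}\subseteq \Omega$ has cardinality $\le |V(G)|/2$, Lemma \ref{l-con} (or simply the definition of $h(G)$) gives $|\partial\{g^2>t\}|\ge h(G)\,|\{g^2>t\}|$, hence
\[
\sum_{uv\in E}|g(u)^2-g(v)^2| \;\ge\; h(G)\sum_{v}g(v)^2.
\]
A Cauchy--Schwarz on the left, together with $(g(u)+g(v))^2\le 2(g(u)^2+g(v)^2)$ summed over edges to give $\sum_{uv}(g(u)+g(v))^2\le 2\sum_v\deg(v)g(v)^2$, yields
\[
h(G)^2\Bigl(\sum_{v}g(v)^2\Bigr)^{\!2} \;\le\; 2\Bigl(\sum_{uv\in E}(g(u)-g(v))^2\Bigr)\Bigl(\sum_{v}\deg(v)g(v)^2\Bigr).
\]
Combining with the Rayleigh bound and using $\deg(v)\le 3$ so that $\sum_v\deg(v)g(v)^2\le 3\sum_v g(v)^2$, I obtain
\[
\lambda_1(G)\;\ge\;\frac{h(G)^2}{2}\cdot\frac{\sum_v g(v)^2}{\sum_v \deg(v)g(v)^2}\cdot\frac{\sum_v g(v)^2}{\sum_v \deg(v)g(v)^2}\cdot\Bigl(\tfrac{\sum_v\deg(v)g(v)^2}{\sum_v g(v)^2}\Bigr)\;\ge\;\frac{h(G)^2}{2\cdot 3^2}\;=\;\frac{h(G)^2}{18},
\]
which is the claim.

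The only real subtlety, which I would need to be careful with, is the bookkeeping between the combinatorial Cheeger constant $h(G)$ (with $|\Omega|$) and the natural volume-weighted one that appears when using the $\deg$-weighted inner product; this is precisely where $d_{\max}=3$ enters to convert between the two and produce the constant $1/18$. Everything else is the classical Dodziuk--Alon--Milman argument applied to the $1$-or-$3$-regular graphs in $\mathcal{F}_{\chi,n}$.
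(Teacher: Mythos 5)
The paper does not give a proof of Proposition~\ref{t-ch}: it is quoted as a known Cheeger-type inequality with a pointer to \cite[Theorem~3.3]{Gri-book}, so there is no in-paper argument to compare against. Your outline is the classical Dodziuk--Alon--Milman proof, and the bookkeeping between the counting-measure Cheeger constant of Section~\ref{sec-ch} and the degree-weighted Rayleigh quotient does produce the constant $1/(2\cdot 3^2)=1/18$ once you use $\deg\le 3$.

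There is, however, a genuine gap in the summation-by-parts step. After replacing $f$ by $f-c$ with $c$ a median, the shifted function no longer satisfies $-\Delta_G f=\lambda_1 f$; instead $-\Delta_G(f-c)=\lambda_1(f-c)+\lambda_1 c$, so the identity
\[
\sum_v g(v)\sum_{w\sim v}\bigl(f(v)-f(w)\bigr)=\lambda_1\sum_v\deg(v)g(v)^2
\]
acquires an extra term $\lambda_1 c\sum_v\deg(v)g(v)$, which goes the wrong way when $c>0$. You have spliced together two incompatible variants of the argument. The median variant should be run purely through Rayleigh quotients: one uses $R(f-c)\le R(f)=\lambda_1$ because the shift only increases the degree-weighted denominator (as $\sum_v\deg(v)f(v)=0$), together with $R(f-c)\ge\min\{R((f-c)_+),R((f-c)_-)\}$ since the supports are disjoint, and both supports have size $\le|V|/2$ by the choice of median. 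The eigenfunction identity you invoke only applies with $c=0$, and in that case one instead observes that at least one of $|\{f>0\}|$, $|\{f<0\}|$ is $\le|V(G)|/2$ and replaces $f$ by $-f$ if necessary, so the median is not needed at all. Either repair is short, but as written the step is not valid. Separately, your final displayed chain of inequalities telescopes to $\frac{h^2}{2}\cdot\frac{\sum_v g(v)^2}{\sum_v\deg(v)g(v)^2}$ rather than the square of that ratio; the correct intermediate expression is $\lambda_1\ge\frac{h^2}{2}\bigl(\frac{\sum_v g(v)^2}{\sum_v\deg(v)g(v)^2}\bigr)^2\ge\frac{h^2}{18}$, so the stated conclusion is right but the display should be rewritten.
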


We write $\delta G$ for the set of boundary vertices, i.e. the vertices of degree $1$.  The outward derivative operator is defined as
\begin{align*}
    \frac{\partial}{\partial \vec{n}}:\ &\mathbb{R}^{V(G)}\to\mathbb{R}^{\delta G}\\
    &f\to\frac{\partial f}{\partial \vec{n}},
\end{align*}
where $$\frac{\partial f}{\partial \vec{n}}(x)=f(x)-f(y)$$
for any $x\in\delta G$ and $y$ is the unique vertex such that $y\sim x$.

We introduce the Steklov problem on the pair $(G,\delta G)$, see \cite{HH22}. If a nonzero function $f\in\mathbb{R}^{V(G)}$ and $\sigma\in\mathbb{R}$ satisfy
\begin{align*}
    \begin{cases}
        \Delta_G f(x)=0,\ x\in V(G)\setminus\delta G;\\
        \frac{\partial f}{\partial \vec{n}}(x)=\sigma f(x),\ x\in\delta G,
    \end{cases}
\end{align*}
then $\sigma$ is called the Steklov eigenvalue of the graph $G$ with boundary vertices $\delta G$. For a connected graph $G$ with boundary $\delta G$, it has $|\delta G|$ Steklov eigenvalues, which can be enumerated as follows
$$0=\sigma_0(G)<\sigma_1(G)\leq...\leq \sigma_{|\delta G|-1}(G)\leq 1.$$
For any $0\not\equiv f\in \mathbb{R}^{V(G)}$ with $f|_{\delta G}\not\equiv 0$, the Rayleigh quotient is defined as 
$$R(f)=\frac{\sum\limits_{\{x,y\}\in E}(f(x)-f(y))^2}{\sum\limits_{x\in\delta G}f^2(x)}.$$
Then the first Steklov eigenvalue $\sigma_1(G)$ satisfies
\begin{align}\label{e-stek}
    \sigma_1(G)=\min\limits_{f} R(f)
\end{align}
where the minimum is taken over all nonzero functions $f$ such that $$\sum\limits_{x\in\delta G}f(x)=0\text{ and }f|_{\delta G}\not\equiv 0.$$

For any connected graph with boundary, the   Steklov eigenvalues dominate the  Laplacian eigenvalues.
\begin{theorem}\cite[Theorem 1]{SY22}\label{t-com}
    Assume $G$ is a connected graph with boundary $\delta G$. Then for any $0\leq i\leq |\delta G|-1$,
    $$\sigma_i(G)\geq\lambda_i(G).$$
\end{theorem}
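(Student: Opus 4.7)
The plan is to combine the min-max characterizations of the two spectra with a harmonic extension argument, reducing everything to a pointwise inequality between Rayleigh quotients on a suitably chosen $(i+1)$-dimensional test subspace. Both quotients share the same Dirichlet numerator $D(f,f) := \sum_{\{x,y\}\in E(G)}(f(x)-f(y))^2$ but differ in denominators: the normalized Laplacian uses $\sum_{x\in V(G)} \deg(x) f(x)^2$ while the Steklov Rayleigh quotient in \eqref{e-stek} uses $\sum_{x\in \delta G} f(x)^2$. Since boundary vertices have degree $1$ and interior vertices degree $3$,
\begin{align*}
\sum_{x\in V(G)} \deg(x) f(x)^2 \;=\; \sum_{x\in \delta G} f(x)^2 + 3\sum_{x\in V(G)\setminus \delta G} f(x)^2 \;\geq\; \sum_{x\in \delta G} f(x)^2,
\end{align*}
so $R_\lambda(f) \leq R_\sigma(f)$ for every $f$ with $f|_{\delta G}\not\equiv 0$.

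Next I construct the test subspace from Steklov data. Let $\phi_0,\ldots,\phi_i$ be an $L^2(\delta G)$-orthonormal family of Steklov boundary eigenfunctions associated with $\sigma_0\leq\cdots\leq\sigma_i$, and let $f_j$ be the unique harmonic extension of $\phi_j$, i.e.\ $\Delta_G f_j\equiv 0$ on the interior with trace $f_j|_{\delta G}=\phi_j$. Since the traces are linearly independent, $V_{i+1} := \mathrm{span}(f_0,\ldots,f_i)$ has dimension $i+1$. Green's identity together with $\Delta_G f_j(x)=-\sigma_j\phi_j(x)$ for $x\in \delta G$ yields
\begin{align*}
D(f_j,f_k) \;=\; -\sum_{x\in V(G)} \deg(x) f_k(x)\Delta_G f_j(x) \;=\; \sigma_j\sum_{x\in\delta G}\phi_j(x)\phi_k(x) \;=\; \sigma_j\delta_{jk},
\end{align*}
so for any $f=\sum_j c_j f_j \in V_{i+1}$ we get $R_\sigma(f) = \sum_j c_j^2 \sigma_j / \sum_j c_j^2 \leq \sigma_i$. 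Combining with the Rayleigh comparison from the first step and invoking the min-max principle for $\lambda_i$ yields
\begin{align*}
\lambda_i(G) \;\leq\; \max_{0\neq f\in V_{i+1}} R_\lambda(f) \;\leq\; \max_{0\neq f\in V_{i+1}} R_\sigma(f) \;\leq\; \sigma_i(G).
\end{align*}

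The main subtlety is the spectral foundation for the Steklov problem, which I would set up via the Dirichlet-to-Neumann operator $\Lambda:\mathbb{R}^{\delta G}\to\mathbb{R}^{\delta G}$ defined by $\Lambda\phi := \partial H\phi/\partial \vec{n}$, where $H$ denotes harmonic extension. Because $\langle \Lambda\phi,\psi\rangle_{L^2(\delta G)} = D(H\phi,H\psi)$, the operator $\Lambda$ is self-adjoint and non-negative, so it admits an $L^2(\delta G)$-orthonormal eigenbasis; this legitimizes the choice of $\phi_0,\ldots,\phi_i$ even in the presence of repeated eigenvalues and is really the only step beyond the Rayleigh-quotient manipulation that requires care. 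Once this is in place, linear independence of the harmonic extensions $f_j$ and of their boundary traces is automatic, and the proof collapses to the two Rayleigh-quotient inequalities above.
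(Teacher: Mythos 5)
Your argument is correct. The pointwise Rayleigh comparison $R_\lambda(f)\leq R_\sigma(f)$ follows immediately from $\sum_x \deg(x)f(x)^2\geq\sum_{x\in\delta G}f(x)^2$ (only $\deg(x)\geq 1$ on $\delta G$ is needed); the Green's identity step correctly yields $D(f_j,f_k)=\sigma_j\delta_{jk}$ (using that boundary vertices have a unique neighbor, so $\Delta_G f_j(x)=-\sigma_j\phi_j(x)$ on $\delta G$); the $(i+1)$-dimensional test space of harmonic extensions plus min-max for the normalized Laplacian then closes the chain $\lambda_i\leq\max_{V_{i+1}}R_\lambda\leq\max_{V_{i+1}}R_\sigma\leq\sigma_i$; and your concern about repeated Steklov eigenvalues is properly resolved by observing that the Dirichlet-to-Neumann operator is self-adjoint on $L^2(\delta G)$, so an orthonormal eigenbasis exists. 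The paper itself does not reprove this theorem — it cites it from \cite{SY22} — and your proof is exactly the standard min-max argument via harmonic extension and Rayleigh-quotient comparison that one would expect to find there; nothing is missing.
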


\subsection{From graph skeletons to hyperbolic surfaces}\label{sec:graph-to-surface}
 We recall the comparison between the Cheeger constant and the geometric Cheeger constant of a hyperbolic surface. For related notations, see \cite{Mir13,SW23}. 

  Assume $X$ is a hyperbolic surface, denote 
 $$\mathcal{S}(X)=\left\{\alpha=\bigcup\limits_{i=1}^k\alpha_i;\begin{matrix}
     \text{For all $1\leq i\leq k$, $\alpha_i$ is a simple closed curve in $X$}\\ \text{and $X\setminus\alpha=X_1\cup X_2$, where $X_1$ and $X_2$ are}\\ \text{two disjoint subsets.}
 \end{matrix}\right\}.$$ 
 For any $\alpha\in\mathcal{S}(X)$, define
$$H(\alpha)=\frac{\ell(\alpha)}{\min\{\text{Area}(X_1),\text{Area}(X_2)\}}.$$
Then the Cheeger constant $h(X)$ of $X$ is defined as $$h(X)=\inf\limits_{\alpha\in\mathcal{S}(X)}H(\alpha).$$ 
Set
 $$\mathcal{SG}(X)=\left\{\alpha=\bigcup\limits_{i=1}^k\alpha_i;\begin{matrix}
     \text{For all $1\leq i\leq k$, $\alpha_i$ is a simple closed geodesic in $X$}\\ \text{and $X\setminus\alpha=X_1\cup X_2$, where $X_1$ and $X_2$ are}\\ \text{two disjoint connected components.}
 \end{matrix}\right\}.$$
Then the geometric Cheeger constant is defined by
$$H(X)=\inf\limits_{\alpha\in\mathcal{SG}(X)}H(\alpha).$$ 
Since for any $L>0$, there are at most finite simple closed geodesics in $X$ with lengths $\leq L$, one may check that $H(X)$ is realized by some $\alpha\in\mathcal{SG}(X)$.

Mirzakhani observed the following relationship between $h(X)$ and $H(X)$.
\begin{proposition}\cite[Proposition 4.7]{Mir13}\label{p-gh}
    Assume $X$ is a complete noncompact hyperbolic surface of finite area; then
    $$H(X)\geq h(X)\geq\frac{H(X)}{1+H(X)}.$$
\end{proposition}
\begin{remark}
    Mirzakhani   stated above proposition only for the case of compact hyperbolic surfaces. Actually the proof also works for the case of hyperbolic surfaces with some cusps. 
\end{remark}

 Assume $g,n\geq 0$ such that $2g-2+n>0$ and $G\in\mathcal{F}_{g,n}$ is a  connected graph. Recall that
 $$V^\prime(G)=\{v\in V(G);\ \textnormal{deg}(v)=3\}\text{ and }\delta G=\{v\in V(G);\ \textnormal{deg}(v)=1\}.$$
 Now we construct a hyperbolic surface $X_a(G)$ as follows.\\

\noindent \textbf{Construction}. Fix $a>0$. For any vertex $v\in V^\prime(G)$, replace it by a pair of pants $P(v)$, such that the three edges emanating from $v$ correspond to three boundary components of $P(v)$. Assume $e=v\sim w$ is an edge emanating from $v$. If $\text{deg}(w)=3$, then the corresponding boundary component is a simple closed geodesic with length $a$. If $\text{deg}(w)=1$, then the corresponding boundary component is replaced by a cusp. Gluing these pairs of pants along the corresponding simple closed geodesic without twisting, we obtain a hyperbolic surface $X_a(G)$ with genus $g$ and $n$ punctures. See Figure \ref{fig:04} for an example.

We next establish a comparison result for the Cheeger constants of $X_a(G)$.
\begin{lemma}\label{l-com-h}
    For every fixed $a>0$, there exists a constant $C=C(a)>0$ such that for any non-negative integers $g,\ n$ with $2g-2+n>0$ and graph $G\in\mathcal{F}_{g,n}$, the following inequality holds
    $$h(X_a(G))\geq C\cdot \min\{h(G),1\}.$$
\end{lemma}

\begin{proof}
    From the construction of $X_a(G)$, there is a natural pants decomposition $$X_a(G)=\bigcup\limits_{v\in V^\prime(G)}P(v).$$
    Assume $H(X_a(G))$ is realized by some simple closed multi-geodesic $\alpha\in\mathcal{SG}(X_a(G))$ and $$X_a(G)\setminus\alpha=A\cup B.$$ 
    Consider the following disjoint subsets of $V^\prime(G)$,
    $$V_1=\left\{v\in V^\prime(G);\ \textnormal{int}({P(v)})\subseteq A\right\},\ V_2=\left\{v\in V^\prime(G);\ \textnormal{int}({P(v)})\subseteq B\right\},$$
    and 
     $$V_3=\left\{v\in V^\prime(G);\text{ $\textnormal{int}({P(v)})$ intersects with $\alpha$}\right\},$$
     where $\textnormal{int}({P(v)})$ is the set of interior points of the pair of pants $P(v)$. 
   Since for any pair of pants $P$, $\textnormal{Area}(P)=2\pi$, it follows that 
   \begin{align}\label{e-area}
    \text{Area}(A)\leq 2\pi(|V_1|+|V_3|) \text{ and }\text{Area}(B)\leq 2\pi(|V_2|+|V_3|),
\end{align}
 and $\alpha$ has a decomposition 
$$\alpha=\left(\bigcup\limits_{i=1}^p\beta_i\right)\bigcup\left(\bigcup\limits_{j=1}^q\gamma_j\right)$$
   \begin{figure}[ht]
\centering
\includegraphics[width=\textwidth]{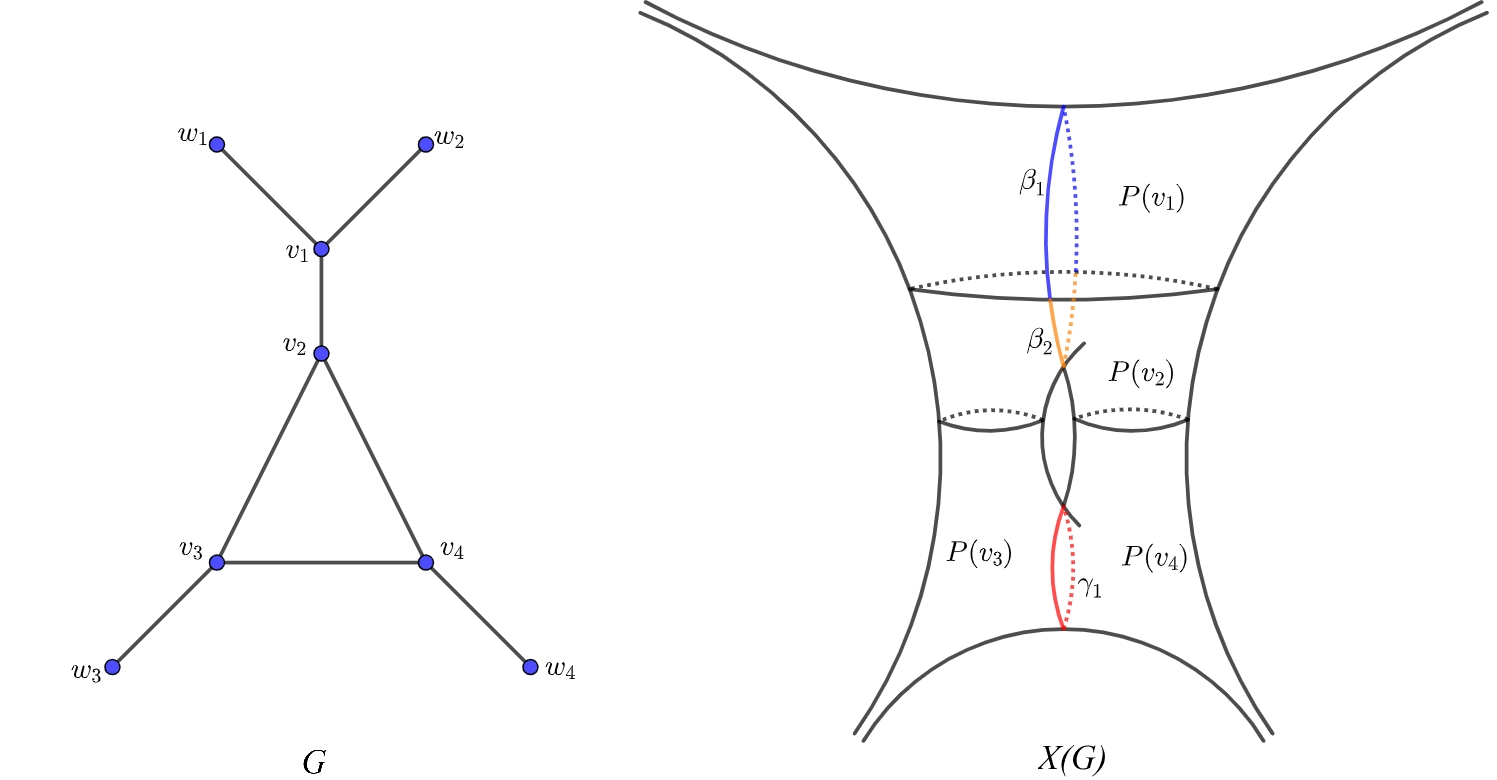}
\caption{Construction of the associated hyperbolic surface}
\label{fig:04}
\end{figure}

such that
\begin{enumerate}
    \item $\beta_i(1\leq i\leq p)$ is a simple geodesic arc contained in $P(v)$ for some $v\in V_3$ and two endpoints of $\beta_i$ are contained in the boundary geodesics of $P(v)$;
    \item $\gamma_j(1\leq j\leq q)$ is a common boundary geodesic of two pairs of pants $P(v_1)$ and $P(v_2)$ for some vertices $v_1\in V_1$ and $v_2\in V_2$.
\end{enumerate}
For example, in Figure \ref{fig:04}, we have
$$V_1=\{v_3\},\ V_2=\{v_4\},\ V_3=\{v_1,v_2\}$$
and
$$\alpha=\beta_1\cup\beta_2\cup\gamma_1.$$
There are two different types of simple geodesic arcs contained in a pair of pants:
\begin{itemize}
\item two endpoints are contained in different boundary geodesics;
\item two endpoints are contained in the same boundary geodesic . 
\end{itemize}
\begin{figure}
\centering
\includegraphics[width=0.6\textwidth]{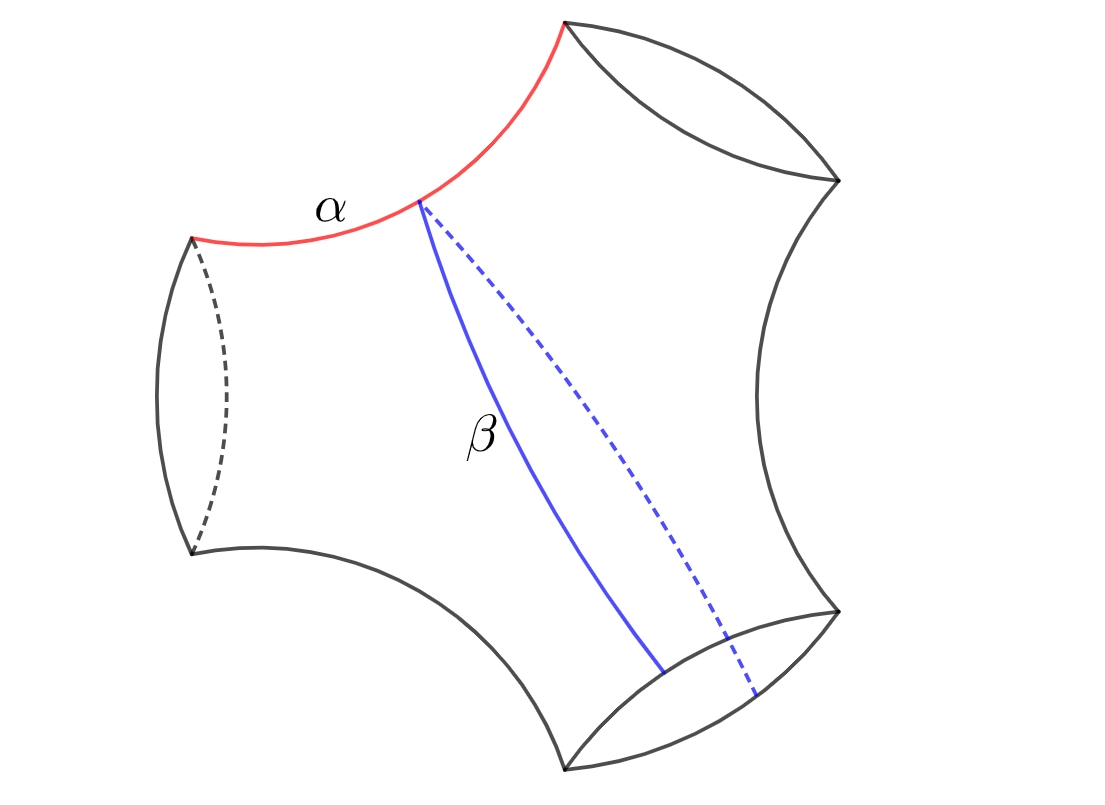}
\caption{The two essential geodesic arc types in a hyperbolic pair of pants}
\label{fig:geo-arc}
\end{figure}
Figure \ref{fig:geo-arc} illustrates the two possible arc types. It follows that there exists a constant $\ell(a)>0$ such that $\ell(\beta_i)\geq \ell(a)$ and
\begin{equation}
\begin{aligned}\label{e-length}
    \ell(\alpha)&=\sum\limits_{i=1}^p \ell(\beta_i)+\sum\limits_{j=1}^q \ell(\gamma_j)\\
    &\geq |V_3|\ell(a)+qa\geq C^\prime(a)(|V_3|+q),
\end{aligned}
\end{equation}
where $C^\prime(a)=\min\{\ell(a),a\}$ is a constant depending only on $a$. 
Note that $$|V_1|+|V_2|+|V_3|=2g-2+n.$$ If $|V_3|=2g-2+n$, then from \eqref{e-length},
\begin{align*}
H(X_a(G))&=\frac{\ell(\alpha)}{\min\{\text{Area}(A),\text{Area}(B)\}}\\
&\geq\frac{(2g-2+n)C^\prime(a)}{\pi(2g-2+n)}\\
&=\frac{C^\prime(a)}{\pi}\geq\frac{C^\prime(a)}{\pi}\min\{h(G),1\}.
\end{align*}
If $|V_3|<2g-2+n$, then one may assume $V_2\neq\emptyset$. For $i=1,2,3$, set
$$V_i^\prime=V_i\cup\left\{v;\ \begin{matrix}\textnormal{deg}(v)=1\text{ and $v$ is}\\ \text{adjacent to some vertex in $V_i$}\end{matrix}\right\}.$$
Then we have a decomposition of $V(G)$:
$$V(G)=\left(V_1^\prime\cup V_3^\prime\right)\cup V_2^\prime.$$ 
Recall that 
$$\partial V_2^\prime=\left\{e\in E(G);\ \begin{matrix}
    e\text{ is an edge joins vertices from}\\ V_2^\prime\text{ and }V_1^\prime\cup V_3^\prime
\end{matrix}\right\}.$$
For any $e\in \partial V_2^\prime$, it corresponds to an element in $V_1^\prime\cup V_3^\prime$, this element is in turn identified to a vertex in $V_3$ if it lies in $V_3^\prime$ or some simple closed geodesic $\gamma_j\ (1\leq j\leq q)$ if it lies in $V_1$. It follows that
$$|\partial V_2^\prime|\leq 3(|V_3|+q).$$
Together with the definition of the Cheeger constant, we have
\begin{align*}
3(|V_3|+q)\geq |\partial V_2^\prime|&\geq h(G)\min\{|V_1^\prime|+|V_3^\prime|,|V_2^\prime|\}\\
&\geq h(G)\min\{|V_1|+|V_3|,|V_2|\}.
\end{align*} 
If $3(|V_3|+q)\geq h(G)(|V_1|+|V_3|)$, then together with \eqref{e-area} and \eqref{e-length}, 
\begin{align*}
    H(X_a(G))&=\frac{\ell(\alpha)}{\min\{\text{Area}(A),\text{Area}(B)\}}\\
    &\geq \frac{C^\prime(a)(|V_3|+q)}{2\pi(|V_1|+|V_3|)}\geq\frac{C^\prime(a)}{6\pi}h(G).
\end{align*}
If $3(|V_3|+q)\geq h(G)|V_2|$, then together with \eqref{e-area} and \eqref{e-length},
\begin{align*}
    H(X_a(G))&=\frac{\ell(\alpha)}{\min\{\text{Area}(A),\text{Area}(B)\}}\\
    &\geq \frac{C^\prime(a)(|V_3|+q)}{2\pi(|V_2|+|V_3|)}\\
    &=\frac{C^\prime(a)}{8\pi}\cdot\frac{3(|V_3|+q)+(|V_3|+q)}{|V_2|+|V_3|}\\
    &\geq \frac{C^\prime(a)}{8\pi}\cdot\frac{h(G)|V_2|+|V_3|}{|V_2|+|V_3|}\geq\frac{C^\prime(a)}{8\pi}\min\{h(G),1\}.
\end{align*}
In summary, we have 
$$H(X_a(G))\geq\frac{C^\prime(a)}{8\pi}\min\{h(G),1\},$$
which together with Proposition \ref{p-gh} implies that
$$h(X_a(G))\geq\frac{H(X_a(G))}{1+H(X_a(G))}\geq\frac{C^\prime(a)}{8\pi+C^\prime(a)}\min\{h(G),1\}.$$
The proof is complete.
\end{proof}

\section{Construction of expander graphs and hyperbolic surfaces}\label{sec:construction}
\subsection{Explicit graph expanders in the linear-cusp regime}\label{sec:explicit-graphs}
For any $k\geq 1$, $T_k$ is a tree defined as follows
\begin{align*}
    &V(T_k)=\{v_0,v_1,...,v_{k},w_1,...,w_{k-1}\};\\
    &E(T_k)=\{v_{i}\sim v_{i+1};\ i\in[0,k-1]\}\cup\{v_i\sim w_i;\ i\in[1,k-1]\}.
\end{align*}
Moreover, $v_0$ is called the \emph{root vertex} of $T_k$. Figure \ref{fig:01} shows  the cases  $k=1,2,3$.
\begin{figure}[ht]
\centering
\includegraphics[width=0.48\textwidth]{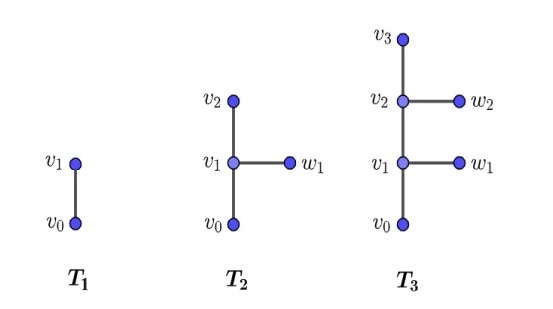}
\caption{The trees $T_k$ for $k=1, 2, 3$.}
\label{fig:01}
\end{figure}

\noindent \emph{Construction.} For any connected graph $G$ and $k\in\mathbb{N}$, the  graph $G_k$ is constructed as follows: for each $e=u_1\sim u_2\in E(G)$, where $u_1,u_2\in V(G)$, replace it by a copy of $T_k$ with root vertex $v_0$, add two edges $e_1=v_0\sim u_1$ and $e_2=v_0\sim u_2$. See Figure \ref{fig:02} for an example in the case that $k=2$.
\begin{figure}[ht]
\centering
\includegraphics[width=0.8\textwidth]{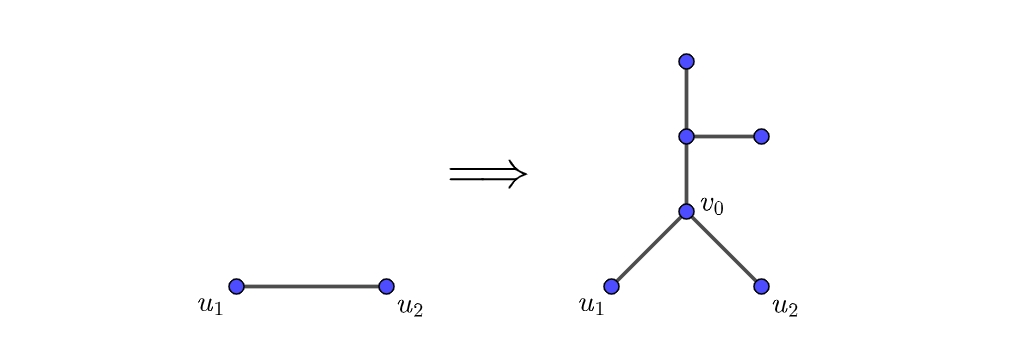}
\caption{Replacement rules for edges}
\label{fig:02}
\end{figure}
 \begin{lemma}\label{l-5.1}
       Assume $G$ is a connected $3$-regular graph and $k\in\mathbb{N}$. If $G_k$ is the graph constructed as above, then 
     $$h(G_k)\geq \min\left\{\frac{1}{2k},\frac{h(G)}{3k+1+kh(G)}\right\}.$$
 \end{lemma}
 \begin{proof}
     Assume $|V(G)|=2n$ and $|E(G)|=3n\ (n\in\mathbb{N})$. From the construction of $G_k$, there are $3n$ copies of $T_k$ contained in $G_k$ and 
     \begin{align}\label{e-5-1}
     |V(G_k)|=2n+3n\times 2k=2n+6nk.
     \end{align}
     From Lemma \ref{l-con}, there exists a subset $U_k\subseteq V(G_k)$ such that 
     \begin{enumerate}[label=(\arabic*)]
         \item $|U_k|\leq\frac{1}{2}|V(G_k)|\text{ and }
     h(G_k)=\displaystyle\frac{|\partial U_k|}{|U_k|}$;
         \item $U_k$ and $U_k^c$ are both connected.
     \end{enumerate}
     
     We first consider the case that there is a copy $T$ of $T_k$ in $G_k$ such that
      $$V(T)\not\subseteq U_k\text{ and } V(T)\not\subseteq U_k^c.$$ 
       Assume  $v_0$ is the root vertex of $T$, then we have $V(T)\setminus\{v_0\}$ is a connected component of $V(G_k)\setminus\{v_0\}$.
      If $v_0\in U_k$, then from the fact that $U_k^c$ is connected, we have either $U_k^c\subseteq V(T)\setminus\{v_0\}$ or $U_k^c\subseteq V(T)^c$. Since $V(T)\not\subseteq U_k$, it follows that $U_k^c\subseteq V(T)$ and
      $$2k=|V(T)|>|U_k^c|\geq n+3nk>2k,$$
      which leads to a contradiction. If $v_0\in U_k^c$, then  by the same arguments as above, we have $U_k\subseteq V(T)$. Hence
     $$h(G_k)=\frac{|\partial U_k|}{|U_k|}\geq\frac{1}{2k}.$$
     
     Now we assume that for any copy of $T_k$ in $G_k$, it is contained in $U_k$ or $U_k^c$.  Note that $V(G)$ could be regarded as a subset of $V(G_k)$ naturally. Set $$U=U_k\cap V(G).$$  If $U=\emptyset$, then $U_k$ is a copy of $T_k$ which implies that 
      $$h(G_k)=\frac{|\partial U_k|}{|U_k|}=\frac{2}{2k}=\frac{1}{k}.$$
      If $U=V(G)$, then $U_k^c$ is a copy of $T_k$. It contradicts the assumption that $$|U_k|\leq\frac{1}{2}|V(G_k)|.$$ 
      It remains to consider the case that $U$ and $U^c$ are both non-empty. For any edge $e=u_1\sim u_2\in \partial U$ with $u_1\in U$ and $u_2\in U^c$. Assume $T$ is the copy of $T_k$ in $G_k$ such that $u_1$ and $u_2$ are both adjacent to the root vertex $v_0\in V(T)$. From the assumption above, $T$ is contained in $U_k$ or $U_k^c$. It follows that exactly one edge $e_k$ from $\{u_1\sim v_0,\  u_2\sim v_0\}$ is contained in $\partial U_k$. One easily checks that it gives a one-to-one correspondence between $\partial U$ and $\partial U_k$, see Figure \ref{fig:03}. Hence 
      \begin{align}\label{e-5-2}
          |\partial U|=|\partial U_k|.
      \end{align}
\begin{figure}[ht]
\centering
\includegraphics[width=0.8\textwidth]{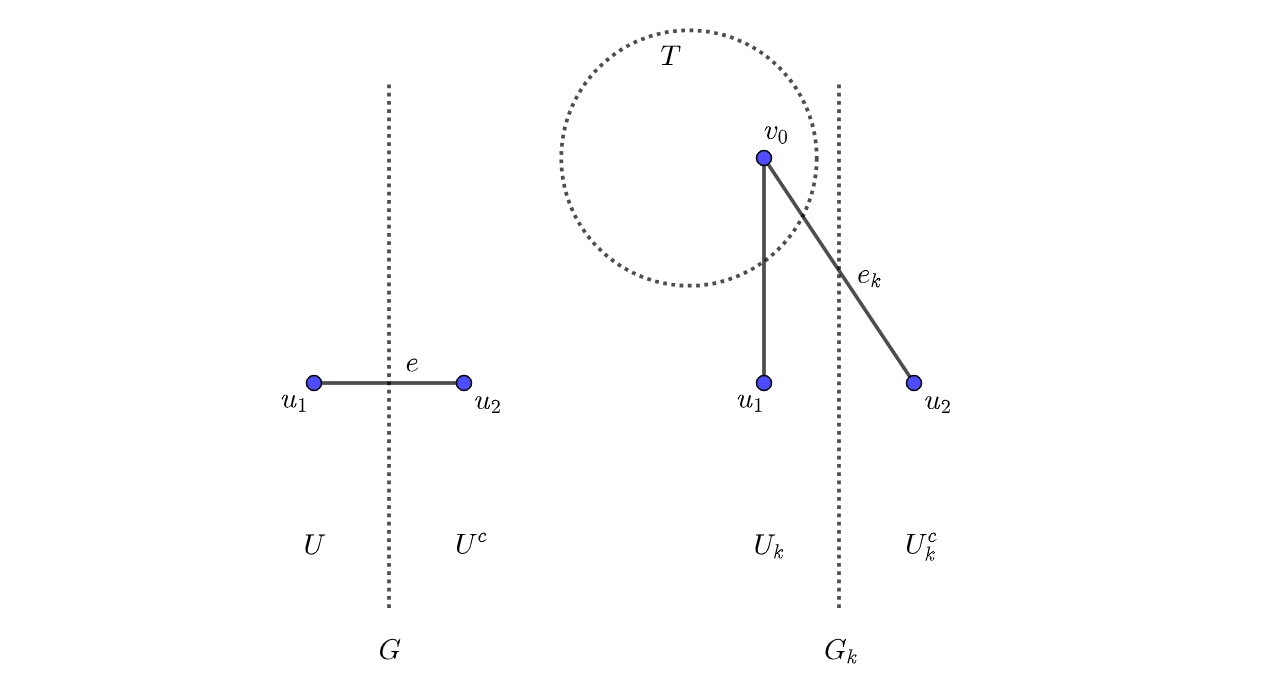}
\caption{Correspondence between $\partial U$ and $\partial U_k$}
\label{fig:03}
\end{figure}
For any edge in $\partial U$, it corresponds to a copy of $T_k$ in $G_k$. Assume that there are exactly $m\ (0\leq m\leq |\partial U|)$ of these copies contained in $U_k$. Then we have
\begin{equation}
\begin{aligned}\label{e-5-3}
|U_k|&=|U|+2k\times\frac{3|U|-|\partial U|}{2}+2km\\
&=(3k+1)|U|-k|\partial U|+2km
\end{aligned}
\end{equation}
and
\begin{align}\label{e-5-4}
    |U_k^c|=(3k+1)|U^c|-k|\partial U|+2kp
\end{align}
where $p=|\partial U|-m$. Consider the following two cases. 

\noindent \underline{\textbf{Case I}. $|U|\leq\frac{1}{2}|V(G)|=n$}. Then we have $$|\partial U|\geq h(G)\cdot|U|.$$ 
Together with \eqref{e-5-2} and \eqref{e-5-3}, it follows that
\begin{align*}
    h(G_k)=\frac{|\partial U_k|}{|U_k|}&=\frac{|\partial U|}{(3k+1)|U|-k|\partial U|+2km}\\
    &\geq\frac{|\partial U|}{(3k+1)|U|+k|\partial U|}\\
    &\geq \frac{h(G)}{3k+1+kh(G)}.
\end{align*}
\underline{\textbf{Case II}. $|U|>\frac{1}{2}|V(G)|=n$}. From \eqref{e-5-1}, $$|U_k^c|\geq \frac{1}{2}|V(G_k)|=n+3nk.$$ 
Together with \eqref{e-5-4}, it follows that
\begin{align}\label{e-5-5}
|U^c|\geq \frac{n+3nk-k|\partial U|}{3k+1}.
\end{align}
Noting that $|\partial U|\geq h(G)\cdot|U^c|$, and applying it to \eqref{e-5-5}, we have
$$|\partial U|\geq\frac{(n+3nk)h(G)}{3k+1+kh(G)}.$$
Together with \eqref{e-5-2}, it follows that $$h(G_k)=\frac{|\partial U_k|}{|U_k|}\geq\frac{|\partial U|}{n+3nk}\geq \frac{h(G)}{3k+1+kh(G)}.$$
The proof is complete.
 \end{proof}

Take a constant \(\eta_0>0\) and a family $\{G(m)\}_{m\geq 1}$ of connected \(3\)-regular graphs such that for any $m\geq 1$,
\[
    |V(G(m))|=2m
    \quad\text{and}\quad
    h(G(m))\geq \eta_0.
\]
Such a family exists from standard explicit constructions of expander graphs; see, e.g., the explicit construction of cubic Ramanujan graphs \cite{Chiu92,Kowalski19} and all-sizes construction of cubic Ramanujan multigraphs
\cite{MSS15,Cohen16}. Set
\[
    \alpha_0=\min\left\{\frac12,\frac{\eta_0}{4+\eta_0}\right\}>0.
\]
For any $k,\ m\in\mathbb{N}$, assume \(G_k(m)\) is the graph induced from $G(m)$ as in the construction above. Then it is straightforward to verify that
\begin{align}\label{e-contain}
    G_k(m)\in\mathcal{F}_{m+1,3mk}.
\end{align}
From Lemma \ref{l-5.1} and Proposition \ref{t-ch}, we have
\begin{align}\label{e-g}
    h(G_k(m))\geq \min\left\{\frac{1}{2k},\frac{\eta_0}{3k+1+k\eta_0}\right\}\geq \frac{\alpha_0}{k}
\end{align}
and 
\begin{align}\label{e-eigenvalue}
    \lambda_1(G_k(m))\geq\frac{\alpha_0^2}{18k^2}
\end{align}
Now we are ready to prove the following proposition.
\begin{proposition}\label{t-main-3}
For every \(\theta>0\), there exist a function \(n:\mathbb N\to\mathbb N\) and a family of connected graphs $\{H(g)\}_{g\geq 2}$ such that
\begin{enumerate}[label=\textup{(\roman*)}]
    \item \(H(g)\in\mathcal{F}_{g,n(g)}\);
    \item \(\displaystyle\lim_{g\to\infty}\frac{n(g)}{g}=\theta\);
    \item \(\lambda_1(H(g))\geq \frac{\alpha_0^2}{2(\theta+3)^2}\).
\end{enumerate}
\end{proposition}

\begin{proof}
    For any \(\theta>0\), choose \(k\in\mathbb{N}\) such that \(\theta\leq 3k<\theta+3\). Let $\{G_k(m)\}_{m\geq 1}$ be the sequence of connected graphs defined above. 
    
    If \(\theta=3k\), set $n(g)=3(g-1)k$ and $H(g)=G_k(g-1)$ for $g\geq 2$. Then it follows from \eqref{e-contain}, \eqref{e-g} and \eqref{e-eigenvalue} that function $n$ and graphs $\{H(g)\}_{g\geq 2}$ are desired.
    
   Now we assume \(\theta<3k<\theta+3\). Set
   $$m_0=\max\left\{\left\lceil\frac{\theta}{3k-\theta}\right\rceil,\left\lceil\frac{3k-2\theta}{(3k+1)\theta}\right\rceil\right\}$$
   For \(m\geq m_0\), define
   \[
       t_m=\left\lfloor\frac{(3k-\theta)m-\theta}{1+\theta}\right\rfloor\geq 0.
   \]
   Then we have for each $m\geq m_0$,
   \begin{equation}
   \begin{aligned}\label{e-g-1}
       t_{m+1}-t_m&\leq 1+\frac{(3k-\theta)(m+1)-\theta}{1+\theta}-\frac{(3k-\theta)m-\theta}{1+\theta}\\
       &=1+\frac{3k-\theta}{1+\theta}<4
   \end{aligned}
   \end{equation}
   and
   \begin{align}\label{e-num-11}
       t_{m+1}\leq 3km.
   \end{align}
   For $m\geq m_0$, set $g_m=m+1+t_m$. We now start to construct the desired function $n$ and sequence $\{H(g)\}_{g\geq 2}$ of graphs.
   
   If $2\leq g<g_{m_0}$, set $n(g)=3k(g-1)$ and $H(g)=G_k(g-1)\in\mathcal{F}_{g,n(g)}$. From \eqref{e-g}, \eqref{e-eigenvalue} and the assumption that $3k<\theta+3$, we have 
   $$h(H(g))\geq\frac{\alpha_0}{k}\geq\frac{3\alpha_0}{\theta+3}$$
   and
   \begin{align}\label{e-e1}
      \lambda_1(H(g))\geq\frac{\alpha_0^2}{18k^2}\geq\frac{\alpha_0^2}{2(\theta+3)^2},
   \end{align}

   If \(g_m\leq g<g_{m+1}\) for some \(m\geq m_0\), from \eqref{e-g-1}, one may write $$g=g_m+u=m+1+u+t_m$$ for some \(0\leq u< 4\). From \eqref{e-num-11}, we have $t_m+u<t_{m+1}<3mk$, it follows that one may take \(v_1,\ldots, v_{t_m+u}\in V(G_k(m))\) which are \(t_m+u\) vertices of degree-$1$. Set $n(g)=3km-t_m-u$, and let \(H(g)\) be the graph obtained from $G_k(m)$ by adding a loop at each vertex \(v_i\)  \((1\leq i\leq t_m+u)\). One may check that \(H(g)\) is a connected graph with \(3km-t_m-u\) vertices of degree-$1$ and \(3km+2m+t_m+u\) vertices of degree-$3$, hence 
   $$H(g)\in\mathcal{F}_{g,n(g)},$$
   which implies that $n$ and $\{H(g)\}_{g\geq 2}$ satisfy the condition $(i)$.
   
  From \eqref{e-g}, we have for \(g_m\leq g<g_{m+1}\ (m\geq m_0)\),
   \begin{align*}
   h(H(g))=h(G_k(m))\geq\frac{\alpha_0}{k}.
   \end{align*}
    Together with Proposition \ref{t-ch} and the assumption that $3k<\theta+3$, we have 
   \begin{align}\label{e-e2}
       \lambda_1(H(g))\geq \frac{1}{18}h(H(g))^2\geq \frac{\alpha_0^2}{2(\theta+3)^2}.
   \end{align}
From \eqref{e-e1} and \eqref{e-e2}, we have $\{H(g)\}_{g\geq 2}$ satisfy the assumption $(iii)$.
   
   For the remaining part, assume $g_m\leq g<g_{m+1}$ for some $m\geq m_0$. Since
   $$\frac{(3k-\theta)m-\theta}{1+\theta}-1\leq t_m+u\leq \frac{(3k-\theta)m-\theta}{1+\theta}+4,$$
   it follows that
   \begin{align*}
   \frac{n(g)}{g}-\theta&=\frac{3mk-t_m-u}{m+1+t_m+u}-\theta\\
   &\geq\frac{3mk-\frac{(3k-\theta)m-\theta}{1+\theta}-4}{m+1+\frac{(3k-\theta)m-\theta}{1+\theta}+4}-\theta\\
   &\geq-\frac{4(1+\theta)^2}{m(3k+1)+1}
   \end{align*}
   and 
   \begin{align*}
      \frac{n(g)}{g}-\theta&=\frac{3mk-t_m-u}{m+1+t_m+u}-\theta\\
      &\leq\frac{3mk-\left(\frac{(3k-\theta)m-\theta}{1+\theta}-1\right)}{m+1+\left(\frac{(3k-\theta)m-\theta}{1+\theta}-1\right)}-\theta \\
      &\leq \frac{(\theta+1)^2}{m(3k+1)-\theta}.
   \end{align*}
   In summary, we have
   \[
       \left|\frac{n(g)}{g}-\theta\right|\leq\max\left\{\frac{(\theta+1)^2}{m(3k+1)-\theta},\ \frac{4(1+\theta)^2}{m(3k+1)+1}\right\}.
   \]
   Since \(m\) tends to infinity as \(g\to\infty\), it follows that
   \begin{align*}
       \lim\limits_{g\to\infty}\frac{n(g)}{g}=\theta.
   \end{align*}
   and the function $n$ satisfies the assumption $(ii)$. 
   
   Hence the function $n$ and graphs $\{H(g)\}_{g\geq 2}$ are desired. The proof is complete.
\end{proof}

\subsection{From graph expanders to hyperbolic surfaces}\label{sec:surfaces}

In this subsection, we complete the proof of Theorem \ref{t-main-4}.

 We first recall some results of noncompact hyperbolic surfaces. For a noncompact hyperbolic surface $X$, the Rayleigh quotient is defined as
\begin{align}\label{e-rayq}
\RayQ(X)\colonequals\inf\limits_{\substack{ f\in H^1(X)\setminus\{0\}\\\int_{X}f=0}}\frac{\int_X |\nabla f|^2}{\int_X f^2}.
\end{align}
Similar to the case of compact surfaces, the following Cheeger's inequality for complete finite-area hyperbolic surfaces still holds, see e.g. \cite[Page 228]{Bu82}:
\begin{align}\label{e-inq}
\RayQ(X)\geq\frac{1}{4}h(X)^2.
\end{align}
For the existence of a nonzero eigenvalue, we have the following theorem of Reed--Simon \cite[Theorem XIII.1]{BM-book}.
\begin{theorem}\label{l-spec}
    Let $X$ be a noncompact hyperbolic surface of finite area. If $$\RayQ(X)<\frac{1}{4},$$
    then $X$ has a nonzero first eigenvalue $\lambda_1(X)$ with $\lambda_1(X)=\RayQ(X)$.
\end{theorem}

\begin{proof}[Proof of Theorem \ref{t-main-4}]
For a given \(\theta>0\), let \(\{H(g)\}_{g\geq 2}\) be the graph sequence constructed in Proposition \ref{t-main-3}. Then we have for $g\geq 2$,
\[
    h(H(g))\geq \frac{3\alpha_0}{\theta+3}
    \quad\text{and}\quad
    H(g)\in\mathcal{F}_{g,n(g)}.
\]
Moreover, the following equality holds, $$\lim\limits_{g\to\infty}\frac{n(g)}{g}=\theta.$$ Fix \(a>0\), and let \(X_a(H(g))\) be the complete finite-area hyperbolic surface obtained from \(H(g)\) by the pants construction of Section \ref{sec:graph-to-surface}. Then \(X_a(H(g))\in\mathcal{M}_{g,n(g)}\).

By Lemma \ref{l-com-h}, there exists a universal constant \(C=C(a)>0\) such that
\[
    h(X_a(H(g)))\geq C\min\{h(H(g)),1\}
    \geq C\min\left\{\frac{3\alpha_0}{\theta+3},1\right\},
\]
which together with \eqref{e-inq} implies that
\[
    \RayQ(X_a(H(g)))\geq \frac14 h(X_a(H(g)))^2
    \geq \min\left\{\frac{9C^2\alpha_0^2}{4(\theta+3)^2},\frac{C^2}{4}\right\}.
\]
Choose
\[
    c_\theta=
    \min\left\{\frac14,\frac{9C^2\alpha_0^2}{4(\theta+3)^2},\frac{C^2}{4}\right\}.
\]
Then from Theorem \ref{l-spec}, one may deduce that \(\Delta_{X_a(H(g))}\) has no eigenvalue contained in \((0,c_\theta)\), i.e.
\[
    \Spec(\Delta_{X_a(H(g))})\cap(0,c_\theta)=\emptyset.
\]
 Since \((\theta+3)^2\leq 9(1+\theta)^2\) for \(\theta>0\), we have $c_\theta\geq c_0(1+\theta)^{-2}$ for some universal constant $c_0>0$
 (after possibly decreasing it). Setting \(S_{g,n(g)}=X_a(H(g))\) completes the proof.
\end{proof}

\subsection{The case that \texorpdfstring{$\theta$}{theta} is small}
We thank Yunhui Wu for helpful discussions on the following proposition. 
\begin{proposition}\label{p-theat-small}
    For any \(\varepsilon>0\), there exists \(\theta_0>0\) such that for any \(0<\theta<\theta_0\), there exists a sequence of hyperbolic surfaces \(\{S_{g,n(g)}\}_{g\geq 2}\) such that
    \[
        \lim\limits_{g\to\infty}\frac{n(g)}{g}=\theta
    \]
    and for $g$ large enough,
    \[
        \Spec(\Delta_{S_{g,n(g)}})\cap\left(0,\frac{1}{4}-\varepsilon\right)=\emptyset.
    \]
\end{proposition}
Recall the notation \(\RayQ(X)\) from \eqref{e-rayq}. Then we have the following lemma.
\begin{lemma}\cite[Lemma 1.3]{BM01}\label{l-compare}
    For all \(r\) sufficiently large and for all \(\varepsilon > 0\), there exists \(C(r, \varepsilon) > 0\) with the following properties. Let \(S^C\) be a hyperbolic Riemann surface and \(\{x_i\}\) a collection of points of \(S^C\) such that the injectivity radii at the points \(x_i\) are all \(\geq r\) and the geodesic balls \(B(x_i, r)\) of radius \(r\) about the \(x_i\) are all disjoint. Let \(S^O\) be the surface obtained from \(S^C\) by replacing the points \(x_i\) by punctures. Then
\[
    \RayQ\left(S^O\right)\geq C(r,\varepsilon)\min\left\{\frac{1}{4}-\varepsilon,\lambda_1\left(S^C\right)\right\}.
\]
Furthermore, \(C(r,\varepsilon)\to 1\) as \(r\to\infty\).
\end{lemma}

\begin{proof}[Proof of Proposition \ref{p-theat-small}]
    For any hyperbolic surface \(X\) and point \(p\in X\), denote by \(\inj_X(p)\) the injectivity radius of \(p\). For any \(R>0\), also denote by
    \[
        X(R)=\{p\in X;\ \inj_X(p)\geq R\}.
    \]
    
    Let \(\mathrm{Prob}_{\mathrm{WP}}^g\) denote the probability measure on the moduli space \(\mathcal{M}_g\) obtained by normalizing the Weil-Petersson volume measure (see e.g. \cite{Mir13}).
     The proof of \cite[Theorem 4.5]{Mir13} shows that for any $\delta>0$,
     \begin{align}\label{e-wp-1}
         \lim\limits_{g\to\infty}\mathrm{Prob}_{\mathrm{WP}}^g\left(X\in\mathcal{M}_g;\ \Vol\left(X\left(\frac{1}{6}\log g\right)\right)\geq (1-\delta)\Vol(X)\right)=1.
     \end{align}
     From the recent near-optimal spectral gap results (see \cite[Theorem 1.6]{AM25} or \cite[Theorem 1.1]{HMT25}), we have for any $\delta>0$,
     \begin{align}\label{e-wp-2}
         \lim\limits_{g\to\infty}\mathrm{Prob}_{\mathrm{WP}}^g\left(X\in\mathcal{M}_g;\ \lambda_1(X)>\frac{1}{4}-\delta\right)=1.
     \end{align}
     Combining \eqref{e-wp-1} and \eqref{e-wp-2}, one may conclude that for any \(0<\delta<\frac{1}{4}\) and all sufficiently large \(g\), there exists a hyperbolic surface \(X_g\in\mathcal{M}_g\) such that
    \begin{enumerate}
        \item \(\lambda_1(X_g)>\frac{1}{4}-\delta\);
        \item \(\Vol\left(X_g\left(\frac{1}{6}\log g\right)\right)\geq (1-\delta)\Vol(X_g)>2\pi(g-1)\).
    \end{enumerate}
    For any \(p\in X_g\) and \(r>0\), assume \(B(p,r)\) is the geodesic ball with center \(p\) and radius \(r\). Set \(n(g)=\left\lfloor\theta g\right\rfloor\). We claim that there exist points
    \[
        \{p_1,\ldots,p_{n(g)}\}\subseteq X_g\left(\frac{1}{6}\log g\right)
    \]
    such that
    \begin{align}\label{e-disjoint}
    B\left(p_i,\frac{1}{2}\log\frac{1}{4\theta}\right)\cap B\left(p_j,\frac{1}{2}\log\frac{1}{4\theta}\right)=\emptyset\quad\text{for }1\leq i\neq j\leq n(g).
    \end{align}
    In fact, assume points \(\{p_1,\ldots,p_k\}\subseteq X_g\left(\frac{1}{6}\log g\right)\) satisfy \eqref{e-disjoint} for some \(k<n(g)\). Since 
    \begin{align*}
\sum\limits_{i=1}^k\Vol\left(B\left(p_i,\log\frac{1}{4\theta}\right)\right)&\leq k\cdot 4\pi\left(\cosh\log\frac{1}{4\theta}-1\right)\\
&< \theta g\cdot 4\pi\cdot\frac{1}{4\theta}<2\pi(g-1),
\end{align*}
it follows that there exists a point
\[
    p_{k+1}\in X_g\left(\frac{1}{6}\log g\right)\setminus\bigcup\limits_{i=1}^k B\left(p_i,\log\frac{1}{4\theta}\right)
\]
such that
\[
B\left(p_{k+1},\frac{1}{2}\log\frac{1}{4\theta}\right)\cap B\left(p_i,\frac{1}{2}\log\frac{1}{4\theta}\right)=\emptyset\quad\text{for }1\leq i\leq k.
\]
Hence the desired points \(\{p_1,\ldots,p_{n(g)}\}\) exist. Denote by \(X_{g,n(g)}\) the hyperbolic surface obtained by replacing \(p_1,\ldots,p_{n(g)}\) with punctures. Then from Lemma \ref{l-compare} and \eqref{e-disjoint}, we have for any \(0<\delta<\frac{1}{2}\),
\begin{align*}
\RayQ(X_{g,n(g)})&>C\left(\frac{1}{2}\log\frac{1}{4\theta},\delta\right)\min\left\{\frac{1}{4}-\delta,\lambda_1(X_g)\right\}\\
&\geq C\left(\frac{1}{2}\log\frac{1}{4\theta},\delta\right)\left(\frac{1}{4}-\delta\right),
\end{align*}
where \(C\left(\frac{1}{2}\log\frac{1}{4\theta},\delta\right)\) is the constant in Lemma \ref{l-compare}.

It remains to choose the parameters. We may assume \(0<\varepsilon<\frac{1}{4}\), since otherwise the interval \((0,\frac{1}{4}-\varepsilon)\) is empty. Take \(0<\delta<\varepsilon\), then we have $$\frac{1}{4}-\delta>\frac{1}{4}-\varepsilon.$$ 
For such fixed $\delta$, Lemma \ref{l-compare} gives
$$\lim\limits_{\theta\to 0}C\left(\frac{1}{2}\log\frac{1}{4\theta},\delta\right)=1,$$
which implies that there exists \(\theta_0>0\) such that for every \(0<\theta<\theta_0\),
\[
    C\left(\frac{1}{2}\log\frac{1}{4\theta},\delta\right)\left(\frac{1}{4}-\delta\right)>\frac14-\varepsilon.
\]
Therefore \(\RayQ(X_{g,n(g)})>\frac{1}{4}-\varepsilon\) for all sufficiently large \(g\), together with Theorem \ref{l-spec}, one may deduce that \(\Delta_{X_{g,n(g)}}\) has no eigenvalue contained in \((0,\frac{1}{4}-\varepsilon)\), i.e.
\[
    \Spec(\Delta_{X_{g,n(g)}})\cap(0,\frac14-\varepsilon)=\emptyset.
\]
Setting \(S_{g,n(g)}=X_{g,n(g)}\) completes the proof.
\end{proof}

\section{The Steklov upper bound and the large-cusp obstruction}\label{sec:steklov}
In this section, we prove Theorem \ref{t-main-1}, which gives a deterministic obstruction when the number of boundary vertices is too large compared with the topological genus.

We first recall the statement of Theorem \ref{t-main-1} for convenience and we always assume that $n\geq 2$ in this section.

\medskip
\noindent\textbf{Theorem \ref{t-main-1}.}
For integers $n\geq 2,\ g\geq 0$, and connected graph \(G\in\mathcal{F}_{g,n}\), we have
\[
    \lambda_1(G)\leq\sigma_1(G)\leq\frac{16(g+1)}{3n}.
\]
\medskip

We next prove the following two lemmas. For a graph $G=(V,E)$ and a subset of edges $K\subseteq E$, we write $G\setminus K$ for a graph $(V,E\setminus K)$ with edges in $K$ removed. For two graphs $G_i=(V_i,E_i)$, $i=1,2$, we write $G_1\subseteq G_2$ if $V_1\subseteq V_2$ and $E_1\subseteq E_2$.  
\begin{lemma}\label{l-div}
With the same assumptions as in Theorem \ref{t-main-1}, there exist $g+1$ edges $$\{e_1,...,e_{g+1}\}\subseteq E(G)$$ such that 
$$G\setminus\{e_1,...,e_{g+1}\}=T_1\sqcup T_2$$
where $T_1$ and $T_2$ are two disjoint trees.
\end{lemma}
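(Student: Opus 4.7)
The idea is to compute the first Betti number of $G$ and then combine a spanning tree argument with one extra edge removal. First I would count edges: since $G$ has $\chi$ vertices of degree $3$ and $n$ vertices of degree $1$, the number of edges is
\[
|E(G)| = \frac{3\chi + n}{2}.
\]
The number of vertices is $\chi + n$, so for the connected graph $G$ the cyclomatic number (first Betti number) is
\[
|E(G)| - |V(G)| + 1 = \frac{3\chi + n}{2} - (\chi + n) + 1 = \frac{\chi - n}{2} + 1 = g.
\]
In other words, $G$ has exactly $g$ independent cycles.

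Next, since $G$ is connected, it admits a spanning tree $T \subset G$, and $T$ uses $|V(G)| - 1 = \chi + n - 1$ edges. Hence there are exactly
\[
|E(G)| - |E(T)| = g
\]
edges of $G$ not in $T$; call them $e_1, \dots, e_g$. Removing these $g$ edges yields $G \setminus \{e_1, \dots, e_g\} = T$, which is a single tree on all of $V(G)$.

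Finally, I would pick any edge $e_{g+1} \in E(T)$. Removing $e_{g+1}$ from the tree $T$ disconnects it into exactly two connected components, each of which is itself a tree (subtrees of $T$). Calling these $T_1$ and $T_2$, we obtain
\[
G \setminus \{e_1, \dots, e_g, e_{g+1}\} = T \setminus \{e_{g+1}\} = T_1 \sqcup T_2,
\]
which is the desired decomposition into two disjoint trees using $g + 1$ removed edges. There is no real obstacle here; the only thing to be careful about is the count $|E(G)| = (3\chi+n)/2$ and matching it against $|V(G)|-1$ to recover exactly $g+1$ edges when we combine the non-tree edges with the single splitting edge.
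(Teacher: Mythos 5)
Your proof is correct and takes essentially the same approach as the paper: both compute the Euler characteristic (equivalently, the cycle rank $g$) to see that removing $g$ suitably chosen edges reduces $G$ to a spanning tree, and then remove one more tree edge to split it into two subtrees. The only cosmetic difference is that you invoke the existence of a spanning tree directly, while the paper reaches the spanning tree by iteratively deleting non-bridge edges and tracking $\chi(G_i)$.
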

\begin{proof}
   Recall that the Euler characteristic $\chi(G)$ of graph $G$ is defined as 
   $$\chi(G)=|V(G)|-|E(G)|.$$
   It is well-known that for any connected graph $G$,
   $$\chi(G)\leq 1.$$
   Moreover, $\chi(G)=1$ if and only if $G$ is a tree. 
Therefore, for any connected graph $G\in\mathcal{F}_{g,n}$, we have
   $$\chi(G)=2g-2+2n-\frac{6g-6+4n}{2}=1-g.$$
   
   If $g=0$, then $G$ is a tree, the result is trivial.

   If $g\geq 1$, then $$\chi(G)=1-g<1,$$ 
   which implies that $G$ is not a tree. It follows that there exists an edge $e_1\in E(G)$ such that $G\setminus\{e_1\}$ is connected. Denote $G_1=G\setminus\{e_1\}$. Then $G_1$ is a connected graph and
   $$\chi(G_1)=|V(G)|-(|E(G)|-1)=2-g.$$
   Repeating the above procedure for $g$ times, one may obtain a set of edges 
   $$\{e_1,...,e_g\}\subseteq E(G)$$
   and a sequence of graphs $G_g\subseteq G_{g-1}\subseteq...\subseteq G_1\subseteq G_0=G$ such that 
   \begin{enumerate}
       \item $G_{i}=G_{i-1}\setminus\{e_i\}$ for all $1\leq i\leq g$;
       \item $G_i$ is connected and $\chi(G_i)=i+1-g$ for all $0 \leq i\leq g$.
   \end{enumerate}
   Then $\chi(G_g)=1$, which implies that the graph $G_g$ is a tree. There exists an edge $$e_{g+1}\in E(G_g)\subseteq E(G)$$ such that $G_g\setminus\{e_{g+1}\}$ consists of two disjoint  trees. Hence the edges
   $$\{e_1,...,e_{g+1}\}$$
   are desired and this completes the proof.
\end{proof}
Assume $G\in\mathcal{F}_{g,n}$ is a connected graph, denote by $\delta G$ the set consisting of all vertices with degree $1$ in $V(G)$. Also recall that for any subset $\Omega\subseteq V(G)$, $\partial\Omega=E(\Omega,\Omega^c)$ (see Subsection \ref{sec-ch}). Then we have
\begin{lemma}\label{l-sub}
    With the same assumptions in Theorem \ref{t-main-1}, there exists a subset $H$ of $V(G)$ such that
    \begin{enumerate}[label=(\roman*)]
        \item $|\partial H|\leq g+1$;
        \item $\frac{n}{4}\leq \left|H\cap\delta G\right|\leq\frac{n}{2}$.
    \end{enumerate}
\end{lemma}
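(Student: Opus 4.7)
The plan is to use the spanning tree structure from Lemma \ref{l-div} together with a weighted centroid argument. Specifically, the proof of Lemma \ref{l-div} implicitly constructs a spanning tree $T := G_g$ of $G$: after removing the cycle-breaking edges $e_1, \dots, e_g$, the graph $G_g$ has Euler characteristic $1$, hence is a tree with $|V(G)| - 1$ edges. Consequently $G$ has exactly $g$ non-tree edges relative to $T$, and $T$ inherits the maximum-degree-$3$ condition from $G$. This reduces the problem to a balancing statement about the set $\delta G$ inside a max-degree-$3$ tree.

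The next step is to apply the weighted centroid construction to $T$, assigning weight $1$ to every vertex of $\delta G$ and weight $0$ to every other vertex. Let $v^\star$ be the resulting centroid: its removal decomposes $T$ into components $C_1, \dots, C_k$ (with $k \leq \deg_T(v^\star) \leq 3$) satisfying $|V(C_i) \cap \delta G| \leq n/2$ for each $i$. I then set $H := V(C_i)$ for the component with the largest $\delta G$-intersection. Since any $v \in \delta G$ has $\deg_G(v) = 1$ and is therefore a leaf of $T$, the centroid $v^\star$ cannot itself lie in $\delta G$ once $n \geq 3$; hence the total weight distributed among $C_1, \dots, C_k$ is exactly $n$, and by the pigeonhole principle the chosen $C_i$ satisfies $|V(C_i) \cap \delta G| \geq n/k \geq n/3 \geq n/4$.

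This choice establishes condition (ii) immediately. For condition (i), observe that $\partial_T H$ consists of the single edge from $C_i$ to $v^\star$ in $T$, so
\[
|\partial_G H| \leq |\partial_T H| + |E(G) \setminus E(T)| \leq 1 + g = g+1,
\]
as required. The main obstacle I anticipate is the careful verification of the weighted centroid's existence and extremal properties in a maximum-degree-$3$ tree, and the handling of the degenerate small-$n$ regime (e.g., $n \in \{1,2\}$, where the interval $[n/4, n/2]$ contains no integer); these edge cases must be dispatched by direct inspection rather than by the centroid mechanism.
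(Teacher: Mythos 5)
Your proof is correct and takes a genuinely different route from the paper's. The paper proves this by contradiction combined with an infinite-descent argument built directly on Lemma \ref{l-div}: assuming no valid $H$ exists, it uses the two-tree decomposition $G\setminus\{e_1,\dots,e_{g+1}\}=T_1\sqcup T_1'$, observes that one of the two trees must carry more than $n/2$ boundary vertices while still having boundary at most $g+1$, and then repeatedly prunes a pendant branch of that tree (the case analysis in the paper's Figure distinguishes whether the cut vertex $w$ has one or two incident tree edges) to produce a strictly shrinking sequence of trees all carrying $>n/2$ boundary vertices, a contradiction in a finite graph. You instead extract a single spanning tree $T=G_g$ from the first $g$ steps of the proof of Lemma \ref{l-div} and apply the weighted-centroid theorem with weight $1$ on $\delta G$; the centroid $v^\star$ has $\deg_T(v^\star)\le 3$, lies outside $\delta G$ once $n\ge 3$, and the largest component $C_i$ of $T\setminus v^\star$ then satisfies $n/3\le|C_i\cap\delta G|\le n/2$, while $|\partial_G V(C_i)|\le 1+|E(G)\setminus E(T)|=1+g$. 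Your route is more direct (no contradiction, no iteration), leans on a standard tool, and even yields the marginally sharper lower bound $n/3$ in place of $n/4$ — though that improvement does not propagate usefully into Theorem \ref{t-main-1}, whose Rayleigh-quotient estimate uses the product $|H\cap\delta G|\,(n-|H\cap\delta G|)$, which is already within a constant of optimal at $n/4$. Your caveat about the degenerate regime $n\le 2$ is well taken but equally afflicts the paper's descent argument (when $T_1$ shrinks to a single boundary leaf, neither Case I nor Case II applies); both proofs implicitly assume $n$ large enough for the statement to be non-vacuous, which is harmless since Theorem \ref{t-main-1} has no content for $n\le 1$.
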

\begin{proof}
    
    We prove the lemma by a contradiction argument.
    
    \textbf{Assumption} $(\star)$: there is no  subset $H$ of $V(G)$ satisfying the conditions $(i)$ and $(ii)$.
    
    From Lemma \ref{l-div}, there exists a set of edges $\{e_1,...,e_{g+1}\}\subseteq E(G)$ such that
    $$G\setminus\{e_1,...,e_{g+1}\}=T_1\sqcup T_1^\prime$$
    where $T_1$ and $T_1^\prime$ are disjoint trees. From assumption $(\star)$, one may assume 
    $$|V(T_1)\cap\delta G|>\frac{n}{2}\text{ and }|\partial V(T_1)|\leq g+1.$$ 
    Now we prove that there exists a subtree $T_2\subseteq T_1$ such that
    \begin{align}\label{e-subg}
    |V(T_2)\cap\delta G|>\frac{n}{2},\ |\partial V(T_2)|\leq g+1\text{ and }| V(T_2)|<| V(T_1)|.
    \end{align}
    Take an edge $e_i\in \partial T_1$ such that $e_i=w\sim v$ with $w\in V(T_1)$ and $v\in V(T_1^\prime)$.
\begin{figure}[ht]
\centering
\includegraphics[width=0.8\textwidth]{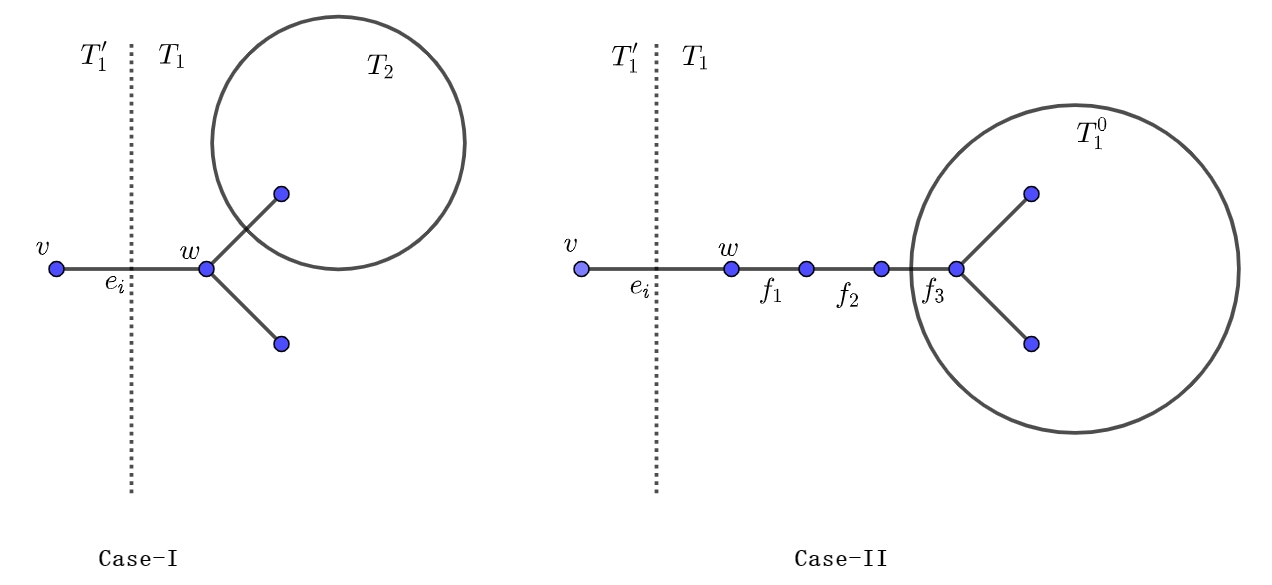}
\caption{Two Cases of edges adjacent to $w$ in $T_1$.}
\label{}
\end{figure}

    \underline{Case I. There are two edges in $T_1$ containing $w$.} There are two connected components after removing such two edges from $T_1$. Denote by $T_2$ the connected component such that
    $$|V(T_2)\cap\delta G|\geq\frac{1}{2}|V(T_1)\cap\delta G|>\frac{n}{4}.$$
   It is clear that $T_2$ is a subtree of $T_1$ such that  
   $$|\partial V(T_2)|\leq |\partial V(T_1)|\leq g+1\text{ and }| V(T_2)|<| V(T_1)|.$$
  It follows from assumption $(\star)$ that  $$|V(T_2)\cap\delta G|>\frac{n}{2}$$ and $T_2$ is a subtree of $T_1$ satisfying condition \eqref{e-subg}.

    \underline{Case II. There is only one edge in $T_1$ containing $w$.} After removing a sequence of consecutive edges $e_i,\ f_1,...,\ f_k$ from $T_1$, one may obtain a subtree $T_1^0\subseteq T_1$ such that $f_k\in\partial V(T_1^0)$ and there are two edges adjacent to $f_k$ in $T_1^0$, moreover
    $$\left|V\left(T_1^0\right)\cap\delta G\right|=|V(T_1)\cap\delta G|>\frac{n}{2}\text{ and }\left|\partial V\left(T_1^0\right)\right|\leq g+1.$$
     By the same argument as in Case I, one may obtain the desired subtree $T_2$. 

    Repeating the procedure above,  one may conclude that there exists an infinite sequence of trees $T_1\supseteq T_2\supseteq...\supseteq T_k\ ...$ such that
    \begin{enumerate}
    \item $| V(T_i)|>| V(T_{i+1})|$ for all $i\geq 1$;
    \item $|V(T_i)\cap\delta G|>\frac{n}{2}$ and $|\partial V(T_i)|\leq g+1$ for all $i\geq 1$.
    \end{enumerate}
    This yields a contradiction since $G$ is a finite graph. The proof is complete.
\end{proof}
Now we are ready to prove Theorem \ref{t-main-1}.

\begin{proof}[Proof of Theorem \ref{t-main-1}]
    Let $H$ be the subset obtained in Lemma \ref{l-sub}. Define the function $f:V(G)\to\mathbb{R}$ as follows:
$$f(x)=\begin{cases}1-\frac{|H\cap\delta G|}{n},&\text{ if }~x\in H,\\
-\frac{|H\cap\delta G|}{n},&\text{ if }~x\notin H.\end{cases}$$
Denote by $s=|H\cap \delta G|$. Then we have 
\begin{align*}
\sum\limits_{x\in\delta G}f(x)=0
\end{align*}
and
\begin{align*}
    \sum\limits_{x\in\delta G}f^2(x)=s\left(1-\frac{s}{n}\right)^2+(n-s)\left(\frac{s}{n}\right)^2=\frac{s(n-s)}{n}
\end{align*}
It follows from Lemma \ref{l-sub} that
\begin{align*}
    R(f)&=\frac{\sum\limits_{(x,y)\in E(G)}(f(x)-f(y))^2}{\sum\limits_{x\in\delta G}f^2(x)}\\
    &=\frac{n|\partial H|}{s(n-s)}\leq \frac{16(g+1)}{3n}.
\end{align*}
Together with  \eqref{e-stek} and Theorem \ref{t-com}, we have
\begin{align*}
\lambda_1(G)\leq \sigma_1(G)\leq R(f)\leq\frac{16(g+1)}{3n}.
\end{align*}
The proof is complete.
\end{proof}

As a direct corollary, we obtain:
\begin{corollary}\label{t-small}
    Assume that $n(g)$ satisfies $\lim\limits_{g\to\infty}\frac{n(g)}{g}=\infty$ and $\{G_g\}_{g\geq 1}$ is a sequence of connected graphs such that $G_{g}\in\mathcal{F}_{g,n(g)}$, then 
    $$\lim\limits_{g\to\infty}\lambda_1(G_g)=\lim\limits_{g\to\infty}\sigma_1(G_g)=0.$$
\end{corollary}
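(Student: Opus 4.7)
The plan is to read off Corollary \ref{t-small} essentially as an immediate consequence of Theorem \ref{t-main-1}: apply the bound to each $G_\chi$ in the sequence and take a limit, once the denominator $n(\chi)$ is shown to grow.

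\medskip

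First, I would invoke Theorem \ref{t-main-1} for every connected graph $G_\chi \in \mathcal{F}_{\chi,n(\chi)}$ to obtain, with $g(\chi)=\frac{\chi-n(\chi)}{2}+1$,
\[
0 \;\leq\; \lambda_1(G_\chi) \;\leq\; \sigma_1(G_\chi) \;\leq\; \frac{16\bigl(g(\chi)+1\bigr)}{3\,n(\chi)} \;=\; \frac{16}{3}\!\left(\frac{g(\chi)}{n(\chi)} + \frac{1}{n(\chi)}\right).
\]
By hypothesis, $\frac{n(\chi)}{g(\chi)} \to \infty$, which is equivalent to $\frac{g(\chi)}{n(\chi)} \to 0$ (interpreting the ratio as $0$ when $g(\chi)=0$), so the first term on the right tends to $0$.

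\medskip

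Second, I need the term $\frac{1}{n(\chi)}$ to vanish as well, which amounts to showing $n(\chi)\to\infty$. This is forced by the hypothesis: if $n(\chi)$ stayed bounded along a subsequence, then since $\chi\to\infty$, the identity $g(\chi)=\frac{\chi-n(\chi)}{2}+1$ would give $g(\chi)\to\infty$ on that subsequence, so $\frac{n(\chi)}{g(\chi)}\to 0$, contradicting the assumption. Hence $n(\chi)\to\infty$ and $\frac{1}{n(\chi)}\to 0$.

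\medskip

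Combining the two limits, the upper bound $\frac{16}{3}\bigl(\frac{g(\chi)}{n(\chi)}+\frac{1}{n(\chi)}\bigr)$ tends to $0$, so by a squeeze argument
\[
\lim_{\chi\to\infty}\sigma_1(G_\chi)=0 \quad\text{and}\quad \lim_{\chi\to\infty}\lambda_1(G_\chi)=0,
\]
finishing the proof. Since Theorem \ref{t-main-1} is already established and the hypothesis is used in a completely routine asymptotic way, there is no real obstacle; the only point that needs a sentence of justification rather than a one-line citation is the implication $\frac{n(\chi)}{g(\chi)}\to\infty \Rightarrow n(\chi)\to\infty$, which I would present as the small subsequence argument above.
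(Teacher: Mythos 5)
Your proposal is correct and is exactly the route the paper intends: the paper states this result as a direct corollary of Theorem \ref{t-main-1}, and your application of the bound $\lambda_1(G_\chi)\leq\sigma_1(G_\chi)\leq\frac{16(g(\chi)+1)}{3n(\chi)}$ together with $\frac{g(\chi)}{n(\chi)}\to 0$ is precisely that argument. Your extra subsequence remark showing $n(\chi)\to\infty$ (needed for the $\frac{1}{n(\chi)}$ term) is a sensible, correct elaboration of a step the paper leaves implicit.
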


\noindent{\bf Acknowledgments.}
The authors would like to thank Yunhui Wu and Will Hide for valuable comments and helpful suggestions. The authors  would also like to thank Michael Magee for bringing our attention to some recent developments in spectral gaps on hyperbolic surfaces. The second author is supported by NSFC, No. 12371056.
The third author is supported by NSFC, No. 12401081.

\noindent{\bf  Keywords}: \quad Hyperbolic surfaces, spectral gaps, Cheeger constants, pants decompositions, expander graphs. 

\noindent\small {\bf \small  Declarations of interest}: none.

\noindent{\bf \small Data availability statement:} There are no new data associated with this article.

\bibliographystyle{plain}
\bibliography{ref}
 
\noindent {Qi Guo\\
School of Mathematics,\\
Renmin University of China, Beijing, 100872, P.R. China\\
e-mail: qguo@ruc.edu.cn}
\medskip
\\
\noindent {Bobo Hua\\
School of Mathematical Sciences, LMNS,\\
Fudan University, Shanghai 200433, P.R. China\\
e-mail: bobohua@fudan.edu.cn }
\medskip
\\
\noindent {Yang Shen\\
School of Mathematics, \\
Hunan University, Changsha, 410082, P.R. China\\
e-mail: shen-y628@hnu.edu.cn }

\end{document}